\newtheorem{theorem}{Theorem}
\newtheorem{lemma}[theorem]{Lemma}
\newtheorem{prop}[theorem]{Proposition}
\newtheorem{corollary}[theorem]{Corollary}
\newtheorem{obs}[theorem]{Observation}
\definecolor{darkblue}{rgb}{0,0,0.7}
\definecolor{darkgreen}{rgb}{0,0.3,0}
\definecolor{darkred}{rgb}{0.7,0,0}
\def\dcup{\,\dot\cup\,}
\def\dotmns{-^{\hspace{-0.19cm}.}\,}
\def\wmns{-^{\hspace{-0.24cm}{\textup{\tiny $w$}}}\,}
\newdimen\unit\newdimen\psep\newcount\nd\newcount\ndx\newbox\dotb\newbox\ptbox
\newdimen\dx\newdimen\dy\newdimen\dxx\newdimen\dyy\newdimen\hgt
\newdimen\xoff\newdimen\yoff
\newcommand\clap[1]{\hbox to 0pt{\hss{#1}\hss}}
\newcommand\vdisk[1]{{\font\dotf=cmr10 scaled #1\dotf.}}
\newcommand\varline[2]{\setbox\dotb\hbox{\vdisk{#1}}\xoff=-.5\wd\dotb
\wd\dotb=0pt\yoff=-.5\ht\dotb\psep=#2\ht\dotb}
\newcommand\varpt[1]{\setbox\ptbox\clap{\vdisk{#1}}\setbox\ptbox
\hbox{\raise-.5\ht\ptbox\box\ptbox}}
\newcommand\cpt{\copy\ptbox}
\newcommand\point[3]{\rlap{\kern#1\unit\raise#2\unit\hbox{#3}}}
\newcommand\setnd[4]{\dx=#3\unit\advance\dx-#1\unit\divide\dx by\psep
\dy=#4\unit\advance\dy-#2\unit\divide\dy by\psep \multiply\dx
by\dx\multiply\dy by\dy\advance\dx\dy\nd=1\advance\dx-1sp
\loop\ifnum\dx>0\advance\dx-\nd sp\advance\nd1\advance\dx-\nd
sp\repeat}
\newcommand\dline[5]{{\nd=#5\hgt=#2\unit\dx=#3\unit\advance\dx-#1\unit
\divide\dx by\nd\dy=#4\unit\advance\dy-#2\unit\divide\dy by\nd
\advance\hgt\yoff\rlap{\kern#1\unit\kern\xoff\loop\ifnum\nd>1\advance\nd-1
\advance\hgt\dy\kern\dx\raise\hgt\copy\dotb\repeat}}}
\newcommand\qellip[4]{{\setnd{0}{0}{#3}{#4}\dx=\unit\dy=0pt\raise\yoff\rlap{%
\kern#1\unit\kern\xoff\raise#2\unit\hbox{\loop\ifnum\dx>0\rlap{\kern#3\dx
\raise#4\dy\copy\dotb}\hgt=\dx\divide\hgt
by\nd\advance\dy\hgt\hgt=\dy \divide\hgt
by\nd\advance\dx-\hgt\repeat\rlap{\raise#4\dy\copy\dotb}}}}}
\newcommand\bez[6]{{\setnd{#1}{#2}{#3}{#4}\ndx=\nd\setnd{#3}{#4}{#5}{#6}
\ifnum\ndx>\nd\nd=\ndx\fi\dx=#3\unit\advance\dx-#1\unit\dy=#4\unit
\advance\dy-#2\unit\dxx=#5\unit\advance\dxx-#1\unit\dyy=#6\unit\advance
\dyy-#2\unit\advance\dxx-2\dx\advance\dyy-2\dy\divide\dxx
by\nd\divide\dyy
by\nd\advance\dx.25\dxx\advance\dy.25\dyy\divide\dx
by\nd\divide\dy by\nd \multiply\nd
by2\dx=100\dx\dy=100\dy\dxx=100\dxx\dyy=100\dyy\divide\dxx by\nd
\divide\dyy
by\nd\hgt=#2\unit\raise\yoff\rlap{\kern#1\unit\kern\xoff
\raise\hgt\copy\dotb\loop\ifnum\nd>0\advance\nd-1\advance\hgt0.01\dy
\kern0.01\dx\raise\hgt\copy\dotb\advance\dx\dxx\advance\dy\dyy\repeat}}}
\newcommand\ptu[3]{\point{#1}{#2}{\cpt\raise1ex\clap{$\scriptstyle{#3}$}}}
\newcommand\ptd[3]{\point{#1}{#2}{\cpt\raise-1.8ex\clap{$\scriptstyle{#3}$}}}
\newcommand\ptr[3]{\point{#1}{#2}{\cpt\raise-.4ex\rlap{$\ \scriptstyle{#3}$}}}
\newcommand\ptl[3]{\point{#1}{#2}{\cpt\raise-.4ex\llap{$\scriptstyle{#3}\ $}}}
\newcommand\ptlu[3]{\point{#1}{#2}{\raise.8ex\clap{$\scriptstyle{#3}$}}}
\newcommand\ptld[3]{\point{#1}{#2}{\raise-1.6ex\clap{$\scriptstyle{#3}$}}}
\newcommand\ptlr[3]{\point{#1}{#2}{\raise-.4ex\rlap{$\,\scriptstyle{#3}$}}}
\newcommand\ptll[3]{\point{#1}{#2}{\raise-.4ex\llap{$\scriptstyle{#3}\,$}}}
\newcommand\thnline{\varline{400}{.6}}
\begin{document}

\title{Cops and Robber on Cartesian products\\
 and some classes of hypergraphs\\[2ex]}
\author{Pinkaew Siriwong\footnotemark[2]
\and Ratinan Boonklurb\footnotemark[2]
\and Henry Liu\thanks{Corresponding author
\newline\indent\hspace{0.12cm}$^\dag$Department of Mathematics and Computer Science, Chulalongkorn University, Bangkok 10300, Thailand. E-mail: psiriwong@yahoo.com$\,\mid\,$ratinan.b@chula.ac.th
\newline\indent\hspace{0.12cm}$^\ddag$School of Mathematics, Sun Yat-sen University, Guangzhou 510275, China. E-mail: liaozhx5@mail.sysu.edu.cn
\newline\indent\hspace{0.12cm}$^\S$Department of Mathematics, Ramkhamhaeng University, Bangkok 10240, Thailand. E-mail: sin\_sirirat@ru.ac.th
}
\footnotemark[3]
\and Sirirat Singhun\footnotemark[4]
\\[2ex]
}
\date{24 July 2021}
\maketitle
\begin{abstract}
The game of \emph{Cops and Robber} is a pursuit-evasion game which is usually played on a connected graph. In the game, a set of cops and a robber move around the vertices of a graph along edges, where the cops aim to capture the robber, while the robber aims to avoid capture. Much research about this game have been done since the early 1980s. The game has a natural generalisation to being played on connected hypergraphs, where the cops and the robber may now move along hyperedges. In this paper, we shall provide a characterisation of all hypergraphs where one cop is sufficient to capture the robber. The \emph{cop-number} of a connected hypergraph is the minimum number of cops required in order to capture the robber. We shall prove some results about the cop-number of certain hypergraphs, including hypertrees and Cartesian products of hypergraphs.\\

\noindent\textbf{AMS Subject Classificiation (2020):} 05C57, 05C65\\

\noindent\textbf{Keywords:} Pursuit-evasion game, Cops and Robber, hypertree, product hypergraph
\end{abstract}

\section{Introduction}

In this paper, all graphs and hypergraphs are finite, undirected, and without multiple edges and loops. Our notations for graphs and hypergraphs throughout are fairly standard. For a graph $G$ and $U\subset V(G)$, we use $G-U$ to denote the subgraph of $G$ induced by $V(G)\setminus U$. In particular, if $x\in V(G)$, we write $G-x$ instead of $G-\{x\}$. Thus, $G-x$ is obtained by deleting $x$ and all edges incident to $x$ from $G$. If $G$ is a connected graph, then $x$ is a \emph{cut-vertex} of $G$ if $G-x$ is a disconnected graph, so that $G-x$ has at least two components. The minimum degree of $G$ is denoted by $\delta(G)$. For a hypergraph $\mathcal H$, the \emph{rank} $r(\mathcal H)$ and \emph{anti-rank} $s(\mathcal H)$ of $\mathcal H$ are the maximum and minimum size of an edge of $\mathcal H$. If $r(\mathcal H)=s(\mathcal H)=r$, then $\mathcal H$ is \emph{$r$-uniform}, or simply \emph{uniform} if we do not wish to make reference to $r$. Two vertices $x,y\in V(\mathcal H)$ are \emph{adjacent} if $x$ and $y$ both belong to some edge of $\mathcal H$, in which case $y$ is a \emph{neighbour} of $x$. The \emph{open neighbourhood} $N_{\mathcal H}(x)$ of $x$ is the set of all neighbours of $x$ in $\mathcal H$, and the \emph{closed neighbourhood} of $x$ is $N_{\mathcal H}[x]=N_{\mathcal H}(x)\cup\{x\}$. We write $N[x]$ if it is clear which hypergraph is in consideration. The \emph{$2$-section} of $\mathcal H$, denoted by $[\mathcal H]_2$, is the graph with vertex set $V([\mathcal H]_2)=V(\mathcal H)$, and edge set $E([\mathcal H]_2)=\{xy:x,y\in e$ for some $e\in E(\mathcal H)\}$. We say that $\mathcal H$ is \emph{connected} if $[\mathcal H]_2$ is connected. For any other undefined terms about graphs and hypergraphs, we refer to the books \cite{Bol98,Vol09}.

The game of \emph{Cops and Robber} is usually played on a finite connected graph $G$. Two players $C$ and $R$, whom we shall always assume are female and male, control a set of $k$ \emph{cops} and the \emph{robber}. The player $C$ starts the game by placing the $k$ cops at some vertices of $G$, then $R$ chooses a vertex to place the robber. $C$ then makes a \emph{move}, which means that she may move some (possibly none or all) of the $k$ cops from their present occupied vertices to adjacent vertices, and leave the remaining cops at their present vertices. $R$ then similarly either moves the robber or passes his turn. These two moves are then repeated alternately. At any stage of the game, including the very first step, $C$ may have more than one cop occupying the same vertex of $G$. If after a finite number of moves, $C$ manages to occupy the same vertex as the robber with one of the cops, i.e., the cop `captures' the robber, then she wins the game. We say that the graph $G$ is \emph{$k$-cop-win}, or simply \emph{cop-win} for $k=1$, if $C$ always has a winning strategy with $k$ cops. The \emph{cop number} $c(G)$ of $G$ is the minimum value of $k$ such that $G$ is a $k$-cop-win graph.

The version of this game with one cop (thus often called \emph{Cop and Robber}) was introduced independently by Quilliot \cite{Qui78} in 1978, and Nowakowski and Winkler \cite{NW83} in 1983. Aigner and Fromme \cite{AF84} introduced the version with multiple cops, along with the concept of the cop number, in 1984. They proved that any planar graph has cop number at most $3$, and some other results about the cop number in relation to the minimum and maximum degrees of the graph. Since then, much research have been done on the topic of the game of Cops and Robber. Perhaps the most glaring open problem is Meyniel's conjecture, which was posed in 1985 and mentioned by Frankl in \cite{Fra87}. The conjecture asserts that for any connected graph $G$ on $n$ vertices, the cop number satisfies $c(G)=O(\sqrt{n})$. See \cite{BB12,LP12,SS11} for some progress on this conjecture. Several variants of Cops and Robber have also been considered. For example, the \emph{Fast Robber} variant allows the robber to move at a greater speed $s>1$. That is, on the player $R$'s turn, he may move the robber along any walk of length at most $s$ which does not contain any cops. See \cite{AM11,BBNS17,FGKNS10,FKL12,Meh11} for some research in this direction. There are also versions of the game where multiple robbers are present. For a general overview of the game of Cops and Robber on graphs, we refer the reader to the book by Bonato and Nowakowski \cite{BN11}.

We may consider playing the game of Cops and Robber on hypergraphs. The rules of the game are exactly analogous to the graphs setting, where now the game is played on a finite connected hypergraph $\mathcal H$. Two players $C$ and $R$, in that order, place a set of $k$ cops and a robber at some vertices of $\mathcal H$. Then $C$ makes a \emph{move}, which means that she moves some of the $k$ cops from their present vertices to adjacent vertices, and leave the remaining cops at their present vertices. $R$ then similarly either moves the robber or passes his turn, and then moves are made alternately between $C$ and $R$. More than one cop may occupy the same vertex at any stage of the game. Each pair of consecutive moves made by $C$ and then by $R$ is a \emph{round}, where the first round consists of the initial placements of the $k$ cops and the robber. As before, $C$ wins the game if she captures the robber with a cop. $\mathcal H$ is a \emph{$k$-cop-win} hypergraph, or \emph{cop-win} for $k=1$, if $C$ has a winning strategy with $k$ cops. The \emph{cop number} $c(\mathcal H)$ of $\mathcal H$ is the minimum value of $k$ such that $\mathcal H$ is a $k$-cop-win hypergraph. We remark that if $\mathcal H$ is a $k$-cop-win hypergraph, then $C$ always has a winning strategy no matter where she places the $k$ cops in the first round, although the number of rounds required to capture the robber may vary. Without danger of confusion, we may use $C_1,\dots,C_k$ to denote the $k$ cops, or simply $C$ if $k=1$, and use $R$ to denote the robber.

Two early references which demonstrated the notion of the game of Cops and Robber in relation to hypergraphs are due to Gottlob, Leone and Scarcelloc \cite{GLS03} from 2003, and Adler \cite{Adl04} from 2004. They studied how the hypertree width of a hypergraph is related to both the games of \emph{Marshals and Robber} (a variant game), and Cops and Robber. Baird \cite{Bai11} also considered the game of Cops and Robber on hypergraphs in 2011, when he studied the game on certain hyperpaths and hypercycles, and proved some other results.

Clearly for a hypergraph $\mathcal H$ and every $x\in V(\mathcal H)$, we have
\begin{equation}\label{nbrhdppty}
N_{\mathcal H}[x]=N_{[\mathcal H]_2}[x].
\end{equation}
We may easily deduce the following.

\begin{obs}\label{csameobs}
Let $\mathcal H$ be a connected hypergraph. Then $c(\mathcal H)=c([\mathcal H]_2)$.
\end{obs}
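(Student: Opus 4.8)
The plan is to show that the game of Cops and Robber played on $\mathcal{H}$ is, move for move, the same game as that played on the $2$-section $[\mathcal{H}]_2$, from which $c(\mathcal{H}) = c([\mathcal{H}]_2)$ follows immediately. The key observation is that the legal moves available to any piece (cop or robber) depend only on the adjacency relation: a piece sitting at a vertex $x$ may move to any vertex in $N[x]$ (i.e.\ stay put or move to a neighbour). By \eqref{nbrhdppty}, we have $N_{\mathcal{H}}[x] = N_{[\mathcal{H}]_2}[x]$ for every $x \in V(\mathcal{H})$, and since $V(\mathcal{H}) = V([\mathcal{H}]_2)$ by definition of the $2$-section, the sets of legal single-piece moves in the two games coincide.

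First I would note that $\mathcal{H}$ is connected if and only if $[\mathcal{H}]_2$ is connected --- this is in fact the definition of connectedness for $\mathcal{H}$ given in the excerpt --- so the game is well-defined on one if and only if it is well-defined on the other. Next I would set up the correspondence between strategies: a strategy for player $C$ (resp.\ $R$) in the game on $\mathcal{H}$ is a function prescribing, for each position (a placement of the $k$ cops and the robber, together with whose turn it is), a legal move; since the positions are the same for both games and the legal moves available at each position are the same by the neighbourhood identity above, any strategy in the game on $\mathcal{H}$ is verbatim a strategy in the game on $[\mathcal{H}]_2$, and vice versa. Moreover, a given play (sequence of positions) is a win for $C$ in one game exactly when it is a win for $C$ in the other, because the winning condition --- a cop occupying the robber's vertex after finitely many rounds --- refers only to vertices, not edges.

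Putting this together: $\mathcal{H}$ is $k$-cop-win if and only if $C$ has a winning strategy with $k$ cops on $\mathcal{H}$, if and only if (by the bijection of strategies and the agreement of outcomes) $C$ has a winning strategy with $k$ cops on $[\mathcal{H}]_2$, if and only if $[\mathcal{H}]_2$ is $k$-cop-win. Taking the minimum such $k$ gives $c(\mathcal{H}) = c([\mathcal{H}]_2)$. I do not anticipate a serious obstacle here; the only point requiring a modicum of care is to state precisely that ``a piece at $x$ may move exactly to the vertices of $N[x]$'' captures the rules in both settings (in the hypergraph game the robber ``moves along a hyperedge'', but moving along a hyperedge from $x$ lands at a vertex of $N_{\mathcal{H}}(x)$, and passing corresponds to staying at $x$), and then to invoke \eqref{nbrhdppty}. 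This is really a bookkeeping argument rather than a substantive one.
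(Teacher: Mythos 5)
Your argument is correct and is essentially the paper's own: the paper likewise deduces the observation from the identity $N_{\mathcal H}[x]=N_{[\mathcal H]_2}[x]$ by noting that the cops (and robber) can play the same strategy in either game. Your write-up merely spells out the strategy bijection in more detail; there is no substantive difference.
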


Indeed, if $c(\mathcal H)$ cops are played on $[\mathcal H]_2$, then in view of (\ref{nbrhdppty}), they have a winning strategy by playing as they would in $\mathcal H$, so that $c(\mathcal H)\ge c([\mathcal H]_2)$. Similarly, we may obtain $c(\mathcal H)\le c([\mathcal H]_2)$. We will see that Observation \ref{csameobs} will be very useful, as it will allow us to derive some results about the cop number of a hypergraph from their analogues for a graph. However, despite having Observation \ref{csameobs}, the study of the game of Cops and Robber on hypergraphs is far from trivial when one attempts to deduce results from analogous results for graphs. For many hypergraphs, for example hypertrees, the structure of their $2$-sections are far from simple.

There are several fundamental results about Cops and Robber on graphs. In this paper, we shall aim to derive analogues of these results for hypergraphs. Firstly, Nowakowski and\, Winkler\, \cite{NW83}\, gave\, a\, characterisation\, of\, cop-win\, graphs\, in\, terms\, of\, a\, structural\, property of graphs called \emph{dismantlable}. They proved that a graph is cop-win if and only if it is dismantlable. In Section \ref{charsect}, we shall derive an analogous characterisation for cop-win hypergraphs. Next, in Section \ref{specificsect}, we shall prove that all hypertrees are cop-win hypergraphs, and consider the cop number of complete multipartite hypergraphs. Finally, many results about the cop number of the Cartesian product of graphs have been proved. In Section \ref{prodsect}, we shall derive analogous results for the Cartesian product of hypergraphs.

\section{Characterisation of cop-win hypergraphs}\label{charsect}

In this section, we shall derive a characterisation of cop-win hypergraphs. For the case of graphs, such a characterisation was given by Nowakowski and Winkler \cite{NW83} in terms of a structural property of graphs called \emph{dismantlable}, as follows. For a graph $G$, a vertex $v\in V(G)$ is a \emph{corner} if $N[v]\subset N[u]$ for some $u\in V(G)\setminus\{v\}$, in which case $u$ is a \emph{cover} of $v$. A graph $G$ is \emph{dismantlable} if there exists an ordering $v_1,\dots,v_n$ of the vertices of $G$ such that for every $1\le i<n$, $v_i$ is a corner in the subgraph $G-\{v_1,\dots,v_{i-1}\}$. Nowakowski and Winkler proved the following result, which was also mentioned by Aigner and Fromme \cite{AF84}.

\begin{theorem}\label{NWthm}\textup{\cite{AF84,NW83}}
A connected graph is cop-win if and only if it is dismantlable.
\end{theorem}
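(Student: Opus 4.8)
The plan is to prove the two implications separately, each by induction on $n=|V(G)|$, with the base case $n=1$ trivial. The common engine is the observation that a corner yields a \emph{retraction}: if $v$ is a corner with cover $u$, then the map fixing every vertex except sending $v\mapsto u$ is a graph homomorphism of $G$ onto $G-v$, which is exactly what the domination $N[v]\subseteq N[u]$ guarantees.

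For the direction \emph{dismantlable $\Rightarrow$ cop-win}, let $v_1,\dots,v_n$ be a dismantling order, so $v_1$ is a corner of $G$ with some cover $u$. The graph $G'=G-v_1$ is dismantlable via $v_2,\dots,v_n$, hence cop-win by induction. I would then lift the cop's winning strategy from $G'$ to $G$ by a \emph{shadow} argument: the cop imagines a shadow robber at the true robber's vertex, except that whenever the true robber occupies $v_1$ the shadow is placed at $u$. Since $N[v_1]\subseteq N[u]$, every legal move of the true robber corresponds to a legal move of the shadow inside $G'$, so the shadow behaves like a genuine robber on $G'$ and the cop catches it. At the moment of capture the true robber is either at the shadow's vertex (caught) or at $v_1$ with the cop at $u$; as $v_1\in N[u]$, one further move finishes.

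For the harder direction \emph{cop-win $\Rightarrow$ dismantlable}, the crux is the following \textbf{Claim}: every cop-win graph with at least two vertices has a corner. I would prove this by contradiction, exhibiting an evasion strategy for the robber when $G$ has no corner. First note $G$ has no dominating vertex (a vertex $z$ with $N[z]=V(G)$ would make every other vertex a corner with cover $z$); hence after the cop starts at any vertex $c$ there is a vertex $x\notin N[c]$ where the robber may safely start. Inductively, suppose the robber sits at $x$ with the cop at $c$ and $x\notin N[c]$. After the cop moves to some $c'\in N[c]$, the robber seeks $x'\in N[x]$ with $x'\notin N[c']$. If no such $x'$ existed, then $N[x]\subseteq N[c']$; since $x\notin N[c]$ forces $x\neq c'$, this would make $x$ a corner with cover $c'$, contrary to assumption. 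So the robber always has a safe response and evades forever, contradicting cop-win and proving the Claim. Granting a corner $v$ with cover $u$, I would next show $G-v$ is cop-win by pushing the cop's $G$-strategy through the retraction $v\mapsto u$: she follows her $G$-strategy but replaces any move onto $v$ by a move onto $u$, which is legal and dominates $v$, so against a robber confined to $G-v$ she still captures. Then $G-v$ is dismantlable by induction, and prepending $v$ to its dismantling order shows $G$ is dismantlable.

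The main obstacle is the cop-win $\Rightarrow$ dismantlable direction, and within it the corner Claim. The decisive (though clean) point is recognising that the robber's only way to be trapped on a given move is for his current vertex to be dominated by the cop's new position, i.e.\ to be a corner; combined with ruling out a dominating vertex at the start, this yields perpetual evasion. The remaining care lies in verifying that the retraction $v\mapsto u$ is genuinely a homomorphism and that the strategy transported through it still captures a robber on $G-v$, which is where one must be precise about the shadow/retraction bookkeeping.
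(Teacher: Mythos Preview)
The paper does not prove Theorem~\ref{NWthm}: it is quoted as a known result from \cite{AF84,NW83}, so there is no in-paper proof to compare against. Your argument is essentially the classical one and is correct. Two small remarks.

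First, in the cop-win $\Rightarrow$ dismantlable direction you apply the induction hypothesis to $G-v$, which requires $G-v$ to be connected; you do not say this, but it follows at once from the corner condition (any path through $v$ reroutes through its cover $u$), exactly as in the paper's Proposition~\ref{connprp}.

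Second, your proof of the corner Claim differs from the one the paper attributes to \cite{AF84,NW83} and reproduces in Proposition~\ref{NWhgprp}(a). You argue by contradiction, building an explicit perpetual-evasion strategy for the robber under the hypothesis ``no corner''. The paper instead looks at the \emph{final round} of a winning cop strategy against an optimal robber: just before the robber's last move he sits at some $v$, the cop sits at some $u\neq v$, and since the cop is guaranteed to capture on her next move regardless of where the robber goes, one must have $N[v]\subseteq N[u]$, so $v$ is a corner. Both arguments are short and valid; the final-round argument is slightly slicker (no need to rule out a dominating vertex separately or to set up the evasion invariant), while yours has the virtue of being fully constructive on the robber's side and of making transparent exactly why absence of corners is equivalent to the robber never being locally trapped.
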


We shall extend Theorem \ref{NWthm} to hypergraphs. To do this, we shall formulate the definition of a \emph{dismantlable hypergraph}. For a hypergraph $\mathcal H$, a vertex $v\in V(\mathcal H)$ is a \emph{corner} if $N[v]\subset N[u]$ for some $u\in V(\mathcal H)\setminus\{v\}$, in which case $u$ is a \emph{cover} of $v$. Note that the definition of a dismantlable graph is saying that we may delete vertices successively in such a way that at each step, the vertex to be deleted is a corner of the subgraph at the respective step. Thus, we consider how to delete a vertex $x$ from a hypergraph $\mathcal H$. There is more than one possibility to define such a deletion. We would not like to delete $x$ and all edges incident to $x$, since this may possibly yield a hypergraph $\mathcal H'$ which is disconnected if $\mathcal H$ is connected, even if $x$ is a corner of $\mathcal H$. This is undesirable, since we would like the Cops and Robber game to be playable on $\mathcal H'$. To overcome this difficulty, we shall retain the remaining parts of some of the edges incident with $x$ in the deletion process. For a hypergraph $\mathcal H$ and $x\in V(\mathcal H)$, we define the hypergraph $\mathcal H \dotmns x$ where
\begin{align*}
V(\mathcal H\dotmns x) &= V(\mathcal H) \setminus\{x\},\\
E(\mathcal H\dotmns x) &= \{e\in E(\mathcal H):x\not\in e\}\dcup\{f\setminus\{x\}: f\in E(\mathcal H),\,x\in f\textup{ and }|f|\ge 3\}.
\end{align*}

It is not hard to see that if $\mathcal H=G$ is a graph, then the above definition coincides with the usual definition of deleting $x$ from $G$. That is, $\mathcal H\dotmns x = G-x$. With this definition of the deletion of a vertex from a hypergraph, we do have the condition which says that the connectedness property is preserved when a corner is deleted.

\begin{prop}\label{connprp}
Let $\mathcal H$ be a connected hypergraph, and $x\in V(\mathcal H)$ be a corner. Then $\mathcal H\dotmns x$ is also connected.
\end{prop}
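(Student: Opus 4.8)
The plan is to show that any two vertices $y, z \in V(\mathcal H \dotmns x) = V(\mathcal H)\setminus\{x\}$ are joined by a path in the $2$-section $[\mathcal H\dotmns x]_2$, using as a starting point a $y$--$z$ path in $[\mathcal H]_2$ and rerouting it around $x$. Since $\mathcal H$ is connected, fix a $y$--$z$ walk $y = w_0, w_1, \dots, w_\ell = z$ in $[\mathcal H]_2$. If $x$ does not appear among the $w_i$, I first argue that each edge $w_{i}w_{i+1}$ of $[\mathcal H]_2$ survives into $[\mathcal H\dotmns x]_2$: the edge $w_iw_{i+1}$ comes from some $e \in E(\mathcal H)$ with $w_i, w_{i+1} \in e$; if $x \notin e$ then $e \in E(\mathcal H\dotmns x)$, and if $x \in e$ then $|e| \ge 3$ (as $e$ contains $w_i, w_{i+1}, x$), so $e\setminus\{x\} \in E(\mathcal H\dotmns x)$ still contains both $w_i$ and $w_{i+1}$. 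Hence the walk persists and $y, z$ stay connected.

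The only real issue is therefore a walk that passes through $x$, i.e., some $w_i = x$ with $0 < i < \ell$ (the endpoints $y,z$ are not $x$). Here I use that $x$ is a corner: let $u$ be a cover of $x$, so $N[x] \subseteq N[u]$. Consider a maximal run $w_{j}, w_{j+1}, \dots, w_{j+t}$ of consecutive vertices on the walk that all equal $x$ — actually it suffices to handle a single occurrence, so suppose $w_{i-1} \ne x$, $w_i = x$, $w_{i+1} \ne x$ (repeated occurrences are handled by applying the same replacement to each maximal block of $x$'s, or by first reducing the walk so that no vertex repeats). Both $w_{i-1}$ and $w_{i+1}$ are neighbours of $x$ in $[\mathcal H]_2$, hence in $N[x] \subseteq N[u]$, so each of $w_{i-1}, w_{i+1}$ is either equal to $u$ or adjacent to $u$ in $[\mathcal H]_2$, and in fact adjacent to $u$ via an edge of $\mathcal H$. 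I then replace the step $w_{i-1}, x, w_{i+1}$ by $w_{i-1}, u, w_{i+1}$. It remains to check that the two new edges $w_{i-1}u$ and $uw_{i+1}$ lie in $[\mathcal H\dotmns x]_2$; since $u \ne x$, by the same case analysis as above any edge of $\mathcal H$ through $u$ and $w_{i-1}$ (resp. through $u$ and $w_{i+1}$) either avoids $x$ — and so survives — or contains $x$, has size $\ge 3$, and survives with $x$ removed, still containing $u$ and $w_{i-1}$ (resp. $w_{i+1}$). Applying this to every occurrence of $x$ yields a $y$--$z$ walk in $[\mathcal H\dotmns x]_2$, which shows $[\mathcal H\dotmns x]_2$ is connected, i.e., $\mathcal H\dotmns x$ is connected.

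The main obstacle, though minor, is bookkeeping around the degenerate cases: when $w_{i-1} = u$ or $w_{i+1} = u$ (then one of the two replacement edges is vacuous), when several consecutive walk-vertices equal $x$, and when the neighbour relation $N[x] \subseteq N[u]$ must be upgraded from "$w_{i\pm1}$ is a neighbour of $u$" (which is all $N[\cdot]$ literally gives via \eqref{nbrhdppty}) to "there is an edge of $\mathcal H$ containing $u$ and $w_{i\pm1}$" — but these are the same statement, since adjacency in a hypergraph means co-membership in an edge. I would state the edge-survival fact ("if $e \in E(\mathcal H)$ and $a, b \in e$ with $a, b \ne x$, then $a, b$ are adjacent in $\mathcal H\dotmns x$") as a one-line sub-claim at the start and invoke it twice, which keeps the argument clean.
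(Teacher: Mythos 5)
Your proof is correct. It takes a more self-contained route than the paper's: the paper first establishes two lemmas --- that $x$ is a corner of $\mathcal H$ if and only if it is a corner of $[\mathcal H]_2$ (Lemma \ref{lm1}), and that $[\mathcal H\dotmns x]_2=[\mathcal H]_2-x$ (Lemma \ref{lm2}) --- and then argues by contradiction: if $[\mathcal H]_2-x$ had two components, one not containing the cover $y$, then some vertex $z$ of that component would lie in $N_{[\mathcal H]_2}[x]\subset N_{[\mathcal H]_2}[y]$, forcing the edge $yz$ and contradicting the disconnection. You instead reroute an explicit $y$--$z$ walk of $[\mathcal H]_2$ around $x$, replacing each occurrence of $x$ by its cover $u$, and your ``edge survival'' sub-claim (that any two vertices $a,b\neq x$ sharing an edge of $\mathcal H$ remain adjacent in $\mathcal H\dotmns x$) is exactly the inclusion $E([\mathcal H]_2-x)\subseteq E([\mathcal H\dotmns x]_2)$, i.e.\ the half of Lemma \ref{lm2} that is actually needed here. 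The underlying combinatorial content is the same in both arguments (neighbours of $x$ are absorbed into $N[u]$, and edges through $x$ of size at least $3$ survive deletion), so the trade-off is purely organisational: the paper's version is shorter because the lemmas are reused later (notably in Theorem \ref{charthm}), while yours is constructive and needs no external ingredients. One small simplification available to you: since $[\mathcal H]_2$ has no loops, a walk cannot contain two consecutive occurrences of $x$, so the ``maximal block of $x$'s'' case you flag never actually arises.
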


Before we prove Proposition \ref{connprp}, we prove two lemmas.

\begin{lemma}\label{lm1}
Let $\mathcal H$ be a hypergraph, and $x\in V(\mathcal H)$. Then $x$ is a corner of $\mathcal H$ if and only if $x$ is a corner of $[\mathcal H]_2$.
\end{lemma}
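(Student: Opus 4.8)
The plan is to prove both directions by directly unravelling the definitions of corner and of the $2$-section, using the fundamental identity $N_{\mathcal H}[x]=N_{[\mathcal H]_2}[x]$ from~(\ref{nbrhdppty}). Note first that $\mathcal H$ and $[\mathcal H]_2$ have the same vertex set, and for \emph{every} vertex $w$ we have $N_{\mathcal H}[w]=N_{[\mathcal H]_2}[w]$, so the closed neighbourhood of any vertex is literally the same set whether computed in $\mathcal H$ or in $[\mathcal H]_2$.

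With that observation in hand both implications are essentially immediate. Suppose $x$ is a corner of $\mathcal H$. Then there is some $u\in V(\mathcal H)\setminus\{x\}$ with $N_{\mathcal H}[x]\subset N_{\mathcal H}[u]$. Applying~(\ref{nbrhdppty}) to both $x$ and $u$, this says exactly $N_{[\mathcal H]_2}[x]\subset N_{[\mathcal H]_2}[u]$, and since $u\in V([\mathcal H]_2)\setminus\{x\}$, the vertex $x$ is a corner of $[\mathcal H]_2$ with cover $u$. The reverse direction is the same argument read backwards: a cover of $x$ in $[\mathcal H]_2$ is automatically a cover of $x$ in $\mathcal H$, because the containment of closed neighbourhoods is the same inclusion of sets in either structure. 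I would write the proof as a single short paragraph making these two substitutions explicit.

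There is essentially no obstacle here; the only thing to be careful about is to state~(\ref{nbrhdppty}) for both $x$ and the candidate cover $u$ (the identity is for all vertices, so this is free), and to note that the ambient vertex set $V(\mathcal H)\setminus\{x\}$ over which the cover ranges is the same as $V([\mathcal H]_2)\setminus\{x\}$, so no quantifier mismatch arises. This lemma is really just the statement that ``corner'' is a property depending only on the closed-neighbourhood hypergraph, i.e. only on the $2$-section, and~(\ref{nbrhdppty}) packages precisely that fact.
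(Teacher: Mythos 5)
Your proof is correct and follows essentially the same route as the paper: both directions apply the identity~(\ref{nbrhdppty}) to $x$ and to the candidate cover, so the chain of closed-neighbourhood inclusions transfers verbatim between $\mathcal H$ and $[\mathcal H]_2$.
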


\begin{proof}
Suppose that $x$ is a corner of $\mathcal H$, with cover $y\in V(\mathcal H)\setminus\{x\}$. Then by (\ref{nbrhdppty}), $N_{[\mathcal H]_2}[x]=N_{\mathcal H}[x]\subset N_{\mathcal H}[y] = N_{[\mathcal H]_2}[y]$, so that $x$ is a corner of $[\mathcal H]_2$ with cover $y$.

Conversely, suppose that $x$ is a corner of $[\mathcal H]_2$, with cover $y\in V([\mathcal H]_2)\setminus\{x\}$. Again by (\ref{nbrhdppty}), $N_{\mathcal H}[x]=N_{[\mathcal H]_2}[x]\subset N_{[\mathcal H]_2}[y] = N_{\mathcal H}[y]$, so that $x$ is a corner of $\mathcal H$ with cover $y$.
\end{proof}

\begin{lemma}\label{lm2}
Let $\mathcal H$ be a hypergraph, and $x\in V(\mathcal H)$. Then $[\mathcal H\dotmns x]_2=[\mathcal H]_2-x$.
\end{lemma}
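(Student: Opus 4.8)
The plan is to verify directly from the definitions that the graphs $[\mathcal H\dotmns x]_2$ and $[\mathcal H]_2-x$ have the same vertex set and the same edge set. The vertex sets are easy: $V([\mathcal H\dotmns x]_2)=V(\mathcal H\dotmns x)=V(\mathcal H)\setminus\{x\}$ straight from the definition of $\mathcal H\dotmns x$, while $V([\mathcal H]_2-x)=V([\mathcal H]_2)\setminus\{x\}=V(\mathcal H)\setminus\{x\}$ from the definition of the $2$-section. So the work is to show the two edge sets coincide, which I would do by proving the two inclusions in turn.

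For the first inclusion, I would take an edge $yz\in E([\mathcal H\dotmns x]_2)$, so that the distinct vertices $y,z$ both lie in some edge $g$ of $\mathcal H\dotmns x$. By the definition of $E(\mathcal H\dotmns x)$, either $g=e$ for some $e\in E(\mathcal H)$ with $x\notin e$, or $g=f\setminus\{x\}$ for some $f\in E(\mathcal H)$ with $x\in f$ and $|f|\ge 3$. In the first case $y,z\in e$ and $y,z\ne x$; in the second case $y,z\in f\setminus\{x\}\subseteq f$ and again $y,z\ne x$. Either way $y,z$ lie together in an edge of $\mathcal H$ and neither equals $x$, so $yz\in E([\mathcal H]_2-x)$.

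For the reverse inclusion, I would take $yz\in E([\mathcal H]_2-x)$, so $y,z$ are distinct, $y,z\ne x$, and $y,z\in e$ for some $e\in E(\mathcal H)$. If $x\notin e$, then $e\in E(\mathcal H\dotmns x)$ and $y,z\in e$. If $x\in e$, then $e$ contains the three distinct vertices $x,y,z$, hence $|e|\ge 3$, so $e\setminus\{x\}\in E(\mathcal H\dotmns x)$ and $y,z\in e\setminus\{x\}$. In either case $y,z$ lie together in an edge of $\mathcal H\dotmns x$, giving $yz\in E([\mathcal H\dotmns x]_2)$. Combining the two inclusions yields the equality of the graphs.

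The argument is just an unwinding of definitions, so I do not expect a real obstacle; the only point needing a moment's care is the case $x\in e$ in the reverse inclusion, where one must observe that having the three distinct vertices $x,y,z$ in $e$ forces $|e|\ge 3$ — precisely the condition under which $e\setminus\{x\}$ is kept as an edge of $\mathcal H\dotmns x$ (a size-$2$ edge $\{x,y\}$ would be discarded, but then it contributes no edge of $[\mathcal H]_2-x$ either, so there is no mismatch). It is also worth remarking that both graphs may acquire isolated vertices, namely vertices of $\mathcal H$ whose only incident edges are size-$2$ edges through $x$; this is harmless since such vertices remain in the common vertex set $V(\mathcal H)\setminus\{x\}$ on both sides.
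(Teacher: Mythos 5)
Your proof is correct and follows essentially the same route as the paper's: both check the vertex sets trivially and then establish the two edge-set inclusions by unwinding the definition of $\dotmns$, with the same key observation that when $x\in e$ the presence of the three distinct vertices $x,y,z$ forces $|e|\ge 3$, so $e\setminus\{x\}$ survives as an edge of $\mathcal H\dotmns x$. Your closing remarks on discarded size-$2$ edges and isolated vertices are a nice sanity check but not needed beyond what the paper already does.
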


\begin{proof}
Obviously, both $[\mathcal H\dotmns x]_2$ and $[\mathcal H]_2-x$ have the vertex set $V(\mathcal H)\setminus\{x\}$. We show that  $[\mathcal H\dotmns x]_2$ and $[\mathcal H]_2-x$ have the same edge set.

Suppose firstly that $uv\in E([\mathcal H\dotmns x]_2)$. Then either $u,v\in e$ for some $e\in E(\mathcal H)$ with $x\not\in e$, or $u,v\in f\setminus\{x\}$ for some $f\in E(\mathcal H)$ with $x\in f$ and $|f|\ge 3$. We have either $e$ or $f\setminus\{x\}$ inducing a clique in $[\mathcal H]_2$ which contains $u$ and $v$ but not $x$, so that $uv\in E([\mathcal H]_2-x)$.

Conversely, suppose that $uv\in E([\mathcal H]_2-x)$. Then either $u,v\in e$ for some $e\in E(\mathcal H)$ with $x\not\in e$, or $u,v\in f$ for some $f\in E(\mathcal H)$ with $x\in f$. If the former, then clearly $e\in E(\mathcal H\dotmns x)$. If the latter, then $|f|\ge 3$ since $x,u,v\in f$, and $f\setminus\{x\}\in E(\mathcal H\dotmns x)$. In either case, we have $uv\in E([\mathcal H\dotmns x]_2)$.
\end{proof}

\begin{proof}[Proof of Proposition \ref{connprp}]
By Lemma \ref{lm1}, we see that $x$ is a corner of $[\mathcal H]_2$, say with cover $y\in V([\mathcal H]_2)\setminus\{x\}$. By Lemma \ref{lm2}, we have $[\mathcal H\dotmns x]_2=[\mathcal H]_2-x$. Thus it suffices to show that $[\mathcal H]_2-x$ is connected. Suppose on the contrary that there exist two components $G_1$ and $G_2$ in $[\mathcal H]_2-x$. We may assume that $y\in V(G_1)$. Since $\mathcal H$ is connected implies that $[\mathcal H]_2$ is also connected, there exists $z\in V(G_2)$ such that $xz\in E([\mathcal H]_2)$. Thus, $z\in N_{[\mathcal H]_2}[x]\subset N_{[\mathcal H]_2}[y]$, which implies $yz\in E([\mathcal H]_2)$, a contradiction.
\end{proof}

We are now ready to prove the result about the charaterisation of cop-win hypergraphs. We also include two other equivalent conditions involving the $2$-section. For a hypergraph $\mathcal H$ and an ordering $v_1,\dots,v_n$ of the vertices of $\mathcal H$, define the hypergraphs $\mathcal H_i$ inductively by $\mathcal H_1=\mathcal H$, and $\mathcal H_{i+1}=\mathcal H_i\dotmns v_i$ for $1\le i<n$. We say that $\mathcal H$ is \emph{dismantlable} if we can choose the ordering $v_1,\dots,v_n$ such that, for every $1\le i<n$, $v_i$ is a corner of $\mathcal H_i$.

\begin{theorem}\label{charthm}
Let $\mathcal H$ be a connected hypergraph. The following are equivalent.
\begin{enumerate}
\item[(i)] $\mathcal H$ is a cop-win hypergraph.
\item[(ii)] $\mathcal H$ is dismantlable.
\item[(iii)] $[\mathcal H]_2$ is a cop-win graph.
\item[(iv)] $[\mathcal H]_2$ is dismantlable.
\end{enumerate}
\end{theorem}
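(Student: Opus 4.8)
The plan is to prove the chain of equivalences by combining Theorem \ref{NWthm} with the two lemmas already established about the $2$-section. The key observation is that $(\text{iii})\Leftrightarrow(\text{iv})$ is immediate from Theorem \ref{NWthm} applied to the graph $[\mathcal H]_2$, which is connected since $\mathcal H$ is connected. Likewise, $(\text{i})\Leftrightarrow(\text{iii})$ is exactly Observation \ref{csameobs}. So the entire content of the proof is to establish $(\text{ii})\Leftrightarrow(\text{iv})$, i.e., that $\mathcal H$ is dismantlable as a hypergraph if and only if $[\mathcal H]_2$ is dismantlable as a graph.

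For $(\text{ii})\Leftrightarrow(\text{iv})$, I would argue that an ordering $v_1,\dots,v_n$ of $V(\mathcal H)=V([\mathcal H]_2)$ witnesses dismantlability of $\mathcal H$ if and only if the same ordering witnesses dismantlability of $[\mathcal H]_2$. The crucial claim is that for each $i$, the $i$-th hypergraph $\mathcal H_i$ in the deletion sequence for $\mathcal H$ satisfies $[\mathcal H_i]_2 = [\mathcal H]_2 - \{v_1,\dots,v_{i-1}\}$. This follows by induction on $i$: the base case $i=1$ is trivial, and the inductive step uses Lemma \ref{lm2}, since $[\mathcal H_{i+1}]_2 = [\mathcal H_i \dotmns v_i]_2 = [\mathcal H_i]_2 - v_i = ([\mathcal H]_2 - \{v_1,\dots,v_{i-1}\}) - v_i = [\mathcal H]_2 - \{v_1,\dots,v_i\}$. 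Once this identification is in place, Lemma \ref{lm1} (applied to the hypergraph $\mathcal H_i$, whose $2$-section is the graph $[\mathcal H]_2 - \{v_1,\dots,v_{i-1}\}$) says that $v_i$ is a corner of $\mathcal H_i$ if and only if $v_i$ is a corner of $[\mathcal H_i]_2 = [\mathcal H]_2 - \{v_1,\dots,v_{i-1}\}$, which is precisely the condition for the ordering to witness dismantlability of $[\mathcal H]_2$ at step $i$. Running this equivalence over all $i$ with $1\le i<n$ gives $(\text{ii})\Leftrightarrow(\text{iv})$.

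I would organise the write-up as: first dispose of $(\text{i})\Leftrightarrow(\text{iii})$ by citing Observation \ref{csameobs} and $(\text{iii})\Leftrightarrow(\text{iv})$ by citing Theorem \ref{NWthm}; then prove the inductive identity $[\mathcal H_i]_2 = [\mathcal H]_2 - \{v_1,\dots,v_{i-1}\}$ for any fixed ordering; then conclude $(\text{ii})\Leftrightarrow(\text{iv})$ using Lemma \ref{lm1}. One small point to be careful about is that Lemma \ref{lm1} is stated for an arbitrary hypergraph, so it applies to each $\mathcal H_i$ without issue — but I should make sure $\mathcal H_i$ is genuinely a hypergraph (no empty or singleton edges causing trouble), which is fine because $\dotmns$ only removes a vertex from edges of size $\ge 3$, leaving edges of size $\ge 2$. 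I do not expect any real obstacle here; the main thing is simply to make the bookkeeping of the deletion sequence versus the induced-subgraph sequence fully explicit, since that is where a careless argument could slip. The connectedness of each $\mathcal H_i$ along the way (needed only so that the intermediate games and graphs make sense) is guaranteed by Proposition \ref{connprp} applied repeatedly.
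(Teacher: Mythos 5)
Your proposal is correct and follows essentially the same route as the paper: dispose of (i)$\Leftrightarrow$(iii) via Observation \ref{csameobs} and (iii)$\Leftrightarrow$(iv) via Theorem \ref{NWthm}, then prove (ii)$\Leftrightarrow$(iv) by inductively establishing $[\mathcal H_i]_2=[\mathcal H]_2-\{v_1,\dots,v_{i-1}\}$ with Lemma \ref{lm2} and transferring corners with Lemma \ref{lm1}. No gaps to report.
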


\begin{proof}
Note that (i) $\Longleftrightarrow$ (iii) follows from Observation \ref{csameobs}, and (iii) $\Longleftrightarrow$ (iv) follows from Theorem \ref{NWthm}. It suffices to prove that (ii) $\Longleftrightarrow$ (iv). Let $v_1,\dots,v_n$ be the same ordering of the vertices of both $\mathcal H$ and $[\mathcal H]_2$. For $1\le i<n$, let the hypergraphs $\mathcal H_i$ be defined as above, and $G_i=[\mathcal H]_2-\{v_1,\dots,v_{i-1}\}$ (with $G_1=[\mathcal H]_2$). 

We first show that $[\mathcal H_i]_2=G_i$ for $1\le i<n$. This holds for $i=1$. If $[\mathcal H_i]_2=G_i$ for some $1\le i\le n-2$, then using Lemma \ref{lm2}, $[\mathcal H_{i+1}]_2=[\mathcal H_i\dotmns v_i]_2=[\mathcal H_i]_2-v_i=G_i-v_i=G_{i+1}$, and the assertion holds by induction.

Now, we have $\mathcal H$ is dismantlable with sequence $v_1,\dots,v_n$ $\Longleftrightarrow$ $v_i$ is a corner of $\mathcal H_i$ for all $1\le i<n$ $\Longleftrightarrow$ $v_i$ is a corner of $[\mathcal H_i]_2=G_i$ for all $1\le i<n$ (by Lemma \ref{lm1}) $\Longleftrightarrow$ $[\mathcal H]_2$ is dismantlable with sequence $v_1,\dots,v_n$.
\end{proof}

Nowakowski and Winkler \cite{NW83} (see also Aigner and Fromme \cite{AF84}) observed some other facts, stated in the next proposition.

\begin{prop}\label{NWprp}\textup{\cite{AF84,NW83}}
\indent
\begin{enumerate}
\item[(a)] Let $G$ be a cop-win graph. Then $G$ has a corner.
\item[(b)] Let $x$ be a corner of a connected graph $G$. Then $G$ is cop-win if and only if $G-x$ is cop-win.
\end{enumerate}
\end{prop}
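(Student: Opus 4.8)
The plan is to treat the two parts separately. Part~(a) follows immediately from Theorem~\ref{NWthm}: if $G$ is cop-win and has at least two vertices, then $G$ is dismantlable, so there is an ordering $v_1,\dots,v_n$ of $V(G)$ in which $v_1$ is a corner of $G_1=G$; hence $G$ has a corner, namely $v_1$. (The one-vertex case is degenerate and may be excluded or noted separately.) Part~(b) is the substantive half, and I would prove its two implications by different routes, both driven by a single retraction.

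The engine for part~(b) is the retraction $\phi\colon V(G)\to V(G)\setminus\{x\}$ defined by $\phi(x)=u$ and $\phi(w)=w$ for $w\neq x$, where $u$ is a cover of $x$, so that $N[x]\subset N[u]$. First I would verify that $\phi$ is a homomorphism onto $G-x$: for an edge $wy\in E(G)$ with $x\notin\{w,y\}$ we have $\phi(w)\phi(y)=wy\in E(G-x)$, while for an edge $xy\in E(G)$ we have $y\in N[x]\subset N[u]$, so $\phi(x)\phi(y)=uy$ is either a loop ($y=u$) or an edge of $G-x$. Thus adjacent or equal vertices of $G$ are sent to adjacent or equal vertices of $G-x$. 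I would also record that $G-x$ is connected, which is the graph instance of Proposition~\ref{connprp}, so that the game is well defined on $G-x$.

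Given this, the implication ``$G-x$ cop-win $\Rightarrow$ $G$ cop-win'' is cleanest via dismantlability: by Theorem~\ref{NWthm}, $G-x$ admits a dismantling order, and since $x$ is a corner of $G$ with $G-x$ connected, prepending $x$ to that order yields a dismantling order of $G$; hence $G$ is dismantlable and therefore cop-win. For the reverse implication ``$G$ cop-win $\Rightarrow$ $G-x$ cop-win'', I would argue directly on the game. Fixing a winning cop strategy $\sigma$ on $G$, the cop plays on $G-x$ by maintaining a virtual position $c_t\in V(G)$ dictated by $\sigma$ while occupying the real vertex $\phi(c_t)$. Since the robber in $G-x$ never occupies $x$, his position equals its own $\phi$-image, so feeding these positions into $\sigma$ is legitimate, and the homomorphism property guarantees that each real move $\phi(c_{t-1})\to\phi(c_t)$ is legal in $G-x$. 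When $\sigma$ forces $c_T=r_T$ with $r_T\neq x$, we obtain $\phi(c_T)=r_T$, so the robber is captured in $G-x$.

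The main obstacle I anticipate is not any single hard step but the careful bookkeeping around $x$: verifying the homomorphism property of $\phi$ precisely at the edges through $x$, where the hypothesis $N[x]\subset N[u]$ is used; confirming that projecting the cop's walk through $\phi$ never produces an illegal move in $G-x$; and ensuring $G-x$ is connected so that Theorem~\ref{NWthm} applies. Once $\phi$ and its properties are pinned down, both implications follow without further difficulty.
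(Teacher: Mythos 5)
Your proposal is correct, but it does not follow the paper's route, because the paper gives no proof of Proposition~\ref{NWprp} at all: the result is quoted from \cite{AF84,NW83}, and the only argument the paper actually records is the classical ``final round'' analysis, which appears (adapted to hypergraphs) in the proof of Proposition~\ref{NWhgprp}(a) --- in the last round of a winning strategy the robber, at $v$, may move to any vertex of $N[v]$, yet the cop at $u$ captures him regardless, forcing $N[v]\subset N[u]$. By contrast, you derive part (a) and one direction of part (b) from Theorem~\ref{NWthm}, and the remaining direction from a retraction/simulation argument. All of your steps check out: the homomorphism property of $\phi$ at edges through $x$, the connectivity of $G-x$, the prepending of $x$ to a dismantling order of $G-x$, and the projection of the winning strategy through $\phi$ (including the observation that the robber never occupies $x$, so $\phi(r_T)=r_T$) are all sound, and your flagging of the one-vertex degeneracy in (a) is appropriate, since $K_1$ is cop-win but has no corner. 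The trade-off is worth spelling out: in the cited sources, Proposition~\ref{NWprp} is the inductive engine used to \emph{prove} the dismantlability characterisation, so your derivation inverts the historical logical order. Within this paper that is legitimate --- Theorem~\ref{NWthm} is stated earlier and taken as known --- but it makes your argument parasitic on the deeper theorem, whereas the classical proof is elementary and self-contained: part (a) by the final-round argument above, and both directions of (b) by shadow/retraction strategies of exactly the kind you give for the implication ``$G$ cop-win $\Rightarrow$ $G-x$ cop-win''. That retraction argument is the genuinely game-theoretic core of your write-up, and replacing your dismantlability step for the converse by the analogous shadow strategy (the cop plays the winning strategy of $G-x$ against $\phi$ of the robber's position; once she captures the shadow, the hypothesis $N[x]\subset N[u]$ finishes the capture) would yield a proof independent of Theorem~\ref{NWthm}, matching the sources the paper cites.
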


Proposition \ref{NWprp} can be extended to hypergraphs.

\begin{prop}\label{NWhgprp}
\indent
\begin{enumerate}
\item[(a)] Let $\mathcal H$ be a cop-win hypergraph. Then $\mathcal H$ has a corner.
\item[(b)] Let $x$ be a corner of a connected hypergraph $\mathcal H$. Then $\mathcal H$ is cop-win if and only if $\mathcal H \dotmns x$ is cop-win.
\end{enumerate}
\end{prop}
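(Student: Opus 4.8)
The plan is to transfer everything to the $2$-section and then invoke Proposition \ref{NWprp}, in exactly the spirit of the proof of Theorem \ref{charthm}. The three ingredients are Observation \ref{csameobs} (a connected hypergraph and its $2$-section have the same cop number, hence one is cop-win iff the other is), Lemma \ref{lm1} ($x$ is a corner of $\mathcal H$ if and only if $x$ is a corner of $[\mathcal H]_2$), and Lemma \ref{lm2} ($[\mathcal H\dotmns x]_2=[\mathcal H]_2-x$), together with Proposition \ref{connprp} to keep track of connectedness.

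For part (a): if $\mathcal H$ is cop-win, then by Observation \ref{csameobs} the graph $[\mathcal H]_2$ is cop-win, so by Proposition \ref{NWprp}(a) it has a corner; by Lemma \ref{lm1} that same vertex is a corner of $\mathcal H$. For part (b): let $x$ be a corner of the connected hypergraph $\mathcal H$. By Lemma \ref{lm1}, $x$ is a corner of the connected graph $[\mathcal H]_2$; by Proposition \ref{connprp}, $\mathcal H\dotmns x$ is connected, and hence so is $[\mathcal H\dotmns x]_2=[\mathcal H]_2-x$ by Lemma \ref{lm2}. Now I would chain equivalences: $\mathcal H$ is cop-win $\iff$ $[\mathcal H]_2$ is cop-win (Observation \ref{csameobs}) $\iff$ $[\mathcal H]_2-x$ is cop-win (Proposition \ref{NWprp}(b), applicable since $x$ is a corner of the connected graph $[\mathcal H]_2$) $\iff$ $[\mathcal H\dotmns x]_2$ is cop-win (Lemma \ref{lm2}) $\iff$ $\mathcal H\dotmns x$ is cop-win (Observation \ref{csameobs} again, applicable since $\mathcal H\dotmns x$ is connected).

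There is essentially no real obstacle here; the argument is a direct substitution once the earlier lemmas are in place. The one point that requires care is that both Observation \ref{csameobs} and Proposition \ref{NWprp}(b) presuppose connectedness, so one must explicitly cite Proposition \ref{connprp} to know that $\mathcal H\dotmns x$ (equivalently $[\mathcal H]_2-x$) is still connected before applying them. Everything else is routine.
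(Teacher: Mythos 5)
Your proof is correct. For part (b) it is essentially identical to the paper's: the same chain of equivalences through $[\mathcal H]_2$ via Lemma \ref{lm1}, Proposition \ref{NWprp}(b) and Lemma \ref{lm2} (the paper cites Theorem \ref{charthm} where you cite Observation \ref{csameobs}, but these amount to the same thing here, and your explicit appeal to Proposition \ref{connprp} for connectedness is a reasonable extra precaution). For part (a), however, you take a genuinely different route. The paper argues directly inside the hypergraph: it examines the final round of a one-cop winning strategy, where the robber at $v$ may move to any vertex of $N[v]$ and the cop at $u$ can nonetheless capture him, forcing $N[v]\subset N[u]$, i.e.\ $v$ is a corner with cover $u$. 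You instead reduce to the graph case: $\mathcal H$ cop-win implies $[\mathcal H]_2$ cop-win by Observation \ref{csameobs}, so $[\mathcal H]_2$ has a corner by Proposition \ref{NWprp}(a), and that vertex is a corner of $\mathcal H$ by Lemma \ref{lm1}. Both are valid; your version is more uniform with the rest of the section (everything funnels through the $2$-section), while the paper's is self-contained and exhibits the corner concretely as the robber's last position, independent of the earlier lemmas.
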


\begin{proof}
(a) As in the graphs case, we may consider the final round in a cop winning strategy, as mentioned in \cite{AF84,NW83}. The robber moves from some vertex $v\in V(\mathcal H)$ to a vertex in $N[v]$ (possibly remaining at $v$ itself). The cop is at some vertex $u\in V(\mathcal H)\setminus\{v\}$ and is able to capture the robber with her move. This means that $N[v]\subset N[u]$, and $v$ is a corner of $\mathcal H$ with cover $u$.\\[1ex]
\indent (b) By Lemma \ref{lm1}, we have $x$ is a corner of $\mathcal H$ $\Longleftrightarrow$ $x$ is a corner of $[\mathcal H]_2$. Therefore, $\mathcal H$ is cop-win $\Longleftrightarrow$ $[\mathcal H]_2$ is cop-win (by Theorem \ref{charthm}) $\Longleftrightarrow$ $[\mathcal H]_2-x$ is cop-win (by Proposition \ref{NWprp}(b)) $\Longleftrightarrow$ $[\mathcal H\dotmns x]_2$ is cop-win (by Lemma \ref{lm2}) $\Longleftrightarrow$ $\mathcal H\dotmns x$ is cop-win (by Theorem \ref{charthm}).
\end{proof}

To conclude this section, we remark that instead of the deletion operator $\dotmns$\,, we may consider the \emph{weak deletion operator} $\wmns$. For a hypergraph $\mathcal H$ and $x\in V(\mathcal H)$, the \emph{weak delation} of $x$ from $\mathcal H$ yields the hypergraph $\mathcal H\wmns x$ where
\begin{align*}
V(\mathcal H\wmns x) &= V(\mathcal H) \setminus\{x\},\\
E(\mathcal H\wmns x) &= \{e\in E(\mathcal H):x\not\in e\}\dcup\{f\setminus\{x\}: f\in E(\mathcal H),\,x\in f\}.
\end{align*}

We may then use $\wmns$ in place of $\dotmns$ in the definition of a dismantlable hypergraph, and all results in this section would hold with $\wmns$. One minor issue with using $\wmns$ is that when $\mathcal H=G$ is a graph, we do not have $G\wmns x=G-x$, since $G\wmns x$ will have edges of size $1$ at every vertex of $N_G(x)$, and so $G\wmns x$ is not a graph if $N_G(x)\neq\emptyset$. Essentially, edges of size $1$ in the game of Cops and Robber on hypergraphs are irrelevant.

\section{Specific classes of hypergraphs}\label{specificsect}

We shall determine the cop number of some specific classes of hypergraphs in this section, namely, hypertrees and complete multipartite hypergraphs.

Recall that a \emph{leaf} of a tree is a vertex with degree $1$. Every non-trivial tree has at least two leaves, and deleting a leaf from a tree gives another tree. For a tree $T$, we have $x\in V(T)$ is a cut-vertex of $T$ (that is, $T-x$ is a forest with at least two components) if and only if $x$ is not a leaf. For $x,y\in V(T)$, there exists a unique path in $T$ which connects $x$ and $y$. If $P$ is a path and $x,y\in V(P)$ where $x=y$ is possible, we write $xPy$ for the subpath of $P$ with end-vertices $x$ and $y$.

A hypergraph $\mathcal T$ is a \emph{hypertree} if $\mathcal T$ is connected, and there exists a tree $T$ such that $V(T)=V(\mathcal T)$, with every edge of $\mathcal T$ inducing a subtree of $T$. Such a tree $T$ is called a \emph{host tree} of $\mathcal T$.

\begin{obs}\label{hypertreeobs}
Let $\mathcal T$ be a hypertree with host tree $T$.
\begin{enumerate}
\item[(a)] If two vertices $u,v\in V(\mathcal T)$ belong to an edge $e\in E(\mathcal T)$, then all vertices of the unique path connecting $u$ and $v$ in $T$ also belong to $e$.
\item[(b)] For every edge $uv\in E(T)$, there exists $e\in E(\mathcal T)$ such that $u,v\in e$. 
\end{enumerate}
\end{obs}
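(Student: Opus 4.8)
The plan is to derive both parts directly from the definition of a host tree, exploiting that every edge of $\mathcal{T}$ induces a \emph{connected} subgraph of $T$ (a subtree), together with the fact that paths in trees are unique.

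For part (a), I would argue as follows. Given $e\in E(\mathcal{T})$ with $u,v\in e$, the subgraph $T[e]$ of $T$ induced by $e$ is a subtree by hypothesis, hence connected, so it contains a $u$--$v$ path, all of whose vertices lie in $e$. Since $T$ is a tree, this path is the unique $u$--$v$ path of $T$, which gives (a) at once.

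For part (b), I would bring in the connectedness hypothesis. Let $uv\in E(T)$; deleting this edge splits $T$ into exactly two components, say $T_u$ containing $u$ and $T_v$ containing $v$. I claim there is an edge $e\in E(\mathcal{T})$ meeting both $V(T_u)$ and $V(T_v)$: otherwise every edge of $\mathcal{T}$ would lie wholly inside one of the two vertex classes, so $[\mathcal{T}]_2$ would have no edge joining the (nonempty) sets $V(T_u)$ and $V(T_v)$, contradicting the connectedness of $[\mathcal{T}]_2$, equivalently of $\mathcal{T}$. Fixing such an $e$ and choosing $a\in e\cap V(T_u)$ and $b\in e\cap V(T_v)$, the unique $a$--$b$ path of $T$ must traverse the edge $uv$, since that is the only edge of $T$ joining $T_u$ and $T_v$; in particular this path passes through both $u$ and $v$. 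Applying part (a) to $a,b\in e$ then shows every vertex of this path, and so both $u$ and $v$, belongs to $e$, which is (b).

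Neither step presents a serious obstacle; the only point needing care is the claim inside part (b) that some hyperedge crosses the cut $\{V(T_u),V(T_v)\}$, which is precisely where the hypothesis that $\mathcal{T}$ is connected enters. Everything else is immediate from the definition of a host tree and the uniqueness of paths in a tree.
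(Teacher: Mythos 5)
Your argument is correct. The paper states this as an Observation and supplies no proof at all, so there is nothing to compare against; your write-up simply fills in the intended routine details. Part (a) is immediate from the definition of a host tree plus uniqueness of paths in trees, exactly as you say, and in part (b) you correctly identify the one point requiring an actual argument, namely that connectedness of $\mathcal{T}$ forces some hyperedge to cross the cut induced by deleting $uv$ from $T$, after which part (a) finishes the job.
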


We are now ready to prove the following theorem, which asserts that all hypertrees are cop-win hypergraphs.

\begin{theorem}\label{hypertreethm}
Let $\mathcal T$ be a hypertree. Then $c(\mathcal T)=1$.
\end{theorem}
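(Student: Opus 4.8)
The plan is to reduce the claim to the graph setting via the $2$-section, using Theorem~\ref{charthm}. By that theorem, $c(\mathcal T)=1$ if and only if $[\mathcal T]_2$ is dismantlable, so it suffices to exhibit a dismantling order for $[\mathcal T]_2$. Equivalently, using Proposition~\ref{NWhgprp}(b) and Lemma~\ref{lm1}, it is enough to find, in any hypertree $\mathcal T$ on at least two vertices, a corner $x$ of $\mathcal T$; then $\mathcal T\dotmns x$ is again connected (Proposition~\ref{connprp}), and—this is the point one must check—it is again a hypertree, so induction on $|V(\mathcal T)|$ finishes the argument (the base case $|V(\mathcal T)|=1$ being trivial).

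First I would locate the corner. Fix a host tree $T$ of $\mathcal T$ and let $x$ be a leaf of $T$, with unique neighbour $y$ in $T$. I claim $x$ is a corner of $\mathcal T$ with cover $y$, i.e. $N_{\mathcal T}[x]\subseteq N_{\mathcal T}[y]$. Indeed, let $z\in N_{\mathcal T}[x]$ with $z\neq x$; then $x,z$ lie in a common edge $e\in E(\mathcal T)$. Since $e$ induces a subtree of $T$ and $x$ is a leaf of $T$, the unique $x$--$z$ path in $T$ starts with the edge $xy$, so by Observation~\ref{hypertreeobs}(a) all its vertices, in particular $y$ and $z$, lie in $e$. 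Hence $y,z\in e$, so $z\in N_{\mathcal T}[y]$, and also $x\in e$ gives $x\in N_{\mathcal T}[y]$; thus $N_{\mathcal T}[x]\subseteq N_{\mathcal T}[y]$ as required.

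Next I would verify that $\mathcal T\dotmns x$ is a hypertree with host tree $T-x$. Clearly $T-x$ is a tree on $V(\mathcal T)\setminus\{x\}=V(\mathcal T\dotmns x)$, and $\mathcal T\dotmns x$ is connected by Proposition~\ref{connprp}. It remains to see that every edge of $\mathcal T\dotmns x$ induces a subtree of $T-x$. An edge $e\in E(\mathcal T)$ with $x\notin e$ already induced a subtree of $T$ avoiding $x$, hence a subtree of $T-x$. An edge $f\setminus\{x\}$ coming from $f\in E(\mathcal T)$ with $x\in f$, $|f|\ge 3$: since $x$ is a leaf of $T$ and $f$ induces a subtree of $T$ containing $x$, deleting $x$ from that subtree again leaves a subtree (removing a leaf from a tree yields a tree), which lies in $T-x$; and $|f\setminus\{x\}|\ge 2$ so it is a genuine edge. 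Thus $\mathcal T\dotmns x$ is a hypertree, and by the induction hypothesis it is cop-win; by Proposition~\ref{NWhgprp}(b), so is $\mathcal T$.

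I expect the only subtle point to be the second step—confirming that the deletion operator $\dotmns$ preserves the hypertree structure—since the definition of $\mathcal H\dotmns x$ keeps truncated copies $f\setminus\{x\}$ only of edges of size $\ge 3$, and one must check these truncations remain subtrees of $T-x$ and do not collapse to singletons; the leaf choice for $x$ is exactly what makes this work. The corner verification and the bookkeeping with Theorem~\ref{charthm} are routine. (An alternative, essentially equivalent, route is to give the dismantling order directly as a reverse breadth-first or leaf-stripping order of a host tree $T$, repeatedly removing a current leaf of the shrinking host tree; the corner argument above shows each removed vertex is a corner at its stage.)
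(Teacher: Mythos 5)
Your proposal is correct and follows essentially the same route as the paper's first proof of Theorem~\ref{hypertreethm}: take a leaf $x$ of a host tree $T$, show via Observation~\ref{hypertreeobs}(a) that $x$ is a corner with cover its tree-neighbour $y$, verify that $\mathcal T\dotmns x$ is again a hypertree with host tree $T-x$, and iterate to conclude dismantlability via Theorem~\ref{charthm}. Your packaging as an induction through Proposition~\ref{NWhgprp}(b) rather than an explicit dismantling order is only a cosmetic difference.
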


Since Theorem \ref{hypertreethm} appears to be a fundamental and interesting result, we shall provide two different proofs. In the first proof, we utilise the result about the characterisation of cop-win hypergraphs that was proved in Theorem \ref{charthm}. In the second proof, we provide a self-contained proof which describes a winning strategy for one cop when she plays against the robber in a hypertree.

\begin{proof}[First proof]
The theorem holds for $|V(\mathcal T)|=1$, so we assume that $|V(\mathcal T)|\ge 2$. We show that $\mathcal T$ is dismantlable, so that the result follows from Theorem \ref{charthm}. Fix a host tree $T$ of $\mathcal T$. Let $x$ be a leaf of $T$, and $y$ be the unique neighbour of $x$ in $T$.

We first show that $x$ is a corner of $\mathcal T$, with cover $y$. Every edge $e\in E(\mathcal T)$ with $|e|\ge 2$ and containing $x$ must also contain $y$. Indeed, such an edge $e$ contains some $z\in V(\mathcal T)\setminus\{x\}$, and the unique path connecting $x$ and $z$ in $T$ contains $y$, so that $y\in e$ by Observation \ref{hypertreeobs}(a). Thus $N_{\mathcal T}[x]\subset N_{\mathcal T}[y]$.

Next, we show that $\mathcal T\dotmns x$ is another hypertree, with host tree $T-x$. Since $x$ is a corner of $\mathcal T$, by Proposition \ref{connprp}, $\mathcal T\dotmns x$ is connected. Clearly, $V(T-x)=V(\mathcal T\dotmns x)$. If $e\in E(\mathcal T\dotmns x)$ such that $e\in E(\mathcal T)$ and $x\not\in e$, then clearly $e$ induces a subtree of $T-x$. Now suppose that $f\setminus\{x\}\in E(\mathcal T\dotmns x)$, where $f\in E(\mathcal T)$, $x\in f$ and $|f|\ge 3$. Then $f$ induces a subtree $T'$ of $T$, and $x\in V(T')$. Since $x$ is a leaf of $T$, we see that $x$ is also a leaf of $T'$. Thus, $f\setminus\{x\}$ induces the subtree $T'-x$ of $T-x$.

Now, starting with $\mathcal T$, we may use the operation $\dotmns$ to successively delete corners, obtaining another hypertree upon each deletion. Therefore, $\mathcal T$ is dismantlable, and we are done.
\end{proof}

\begin{proof}[Second proof]
The theorem holds for $|V(\mathcal T)|=1,2$, so we may assume that $|V(\mathcal T)|\ge 3$. We show that one cop $C$ has a winning strategy against the robber $R$. Fix a host tree $T$ of $\mathcal T$. In the first round, $C$ chooses a cut-vertex of $u_1$ of $T$ (which exists since $|V(T)|\ge 3$), so that $T-u_1$ is a forest with at least two components, each of which is a subtree of $T$. $R$ then chooses a vertex $v_1$ of $T-u_1$, which belongs to a component $T_1$ of $T-u_1$.

Suppose that after the $i$th round, $C$ and $R$ are at distinct vertices $u_i$ and $v_i$, where $u_i$ is a cut-vertex of $T$, and $v_i$ is in subtree $T_i$ of $T$ which is a component of $T-u_i$. We show that $C$ can make a move to a vertex $u_{i+1}$ so that after the $(i+1)$th round, $u_{i+1}$ is a cut-vertex of $T$, and $R$ must be at some vertex $v_{i+1}$ in a component $T_{i+1}$ of $T-u_{i+1}$, with $V(T_{i+1})\varsubsetneq V(T_i)$, if he wants to avoid capture. 

Consider the unique path $P$ connecting $u_i$ and $v_i$ in $T$. Let $w$ be the neighbour of $u_i$ in $P$. Then by Observation \ref{hypertreeobs}(b), $u_i$ and $w$ belong to some edge of $\mathcal T$. Also, if both $u_i$ and some vertex $x\in V(P)$ belong to some edge $e\in E(\mathcal T)$, then by Observation \ref{hypertreeobs}(a), all vertices of $u_iPx$ also belong to $e$. It follows that there exists a vertex $u_{i+1}\in V(wPv_i)$ and an edge $f\in E(\mathcal T)$ such that, all vertices of $u_iPu_{i+1}$ belong to $f$, and no vertex of $P-V(u_iPu_{i+1})$ belongs to $N_{\mathcal T}[u_i]$. The cop $C$ makes the move from $u_i$ to $u_{i+1}$ along $f$. If $u_{i+1}=v_i$, then $C$ has captured the robber $R$. Otherwise, $u_{i+1}$ has two neighbours $y,z\in V(P)$, with $y\in V(u_iPu_{i+1})$ and $z\in V(u_{i+1}Pv_i)$. We have $u_{i+1}$ is a cut-vertex of $T$. Now, note that since $u_{i+1}\in V(wPv_i) \subset V(T_i)$, all vertices of $T-V(T_i)$, and also $y$, are in the same component of $T-u_{i+1}$, say $S$. Meanwhile, the vertices $v_i$ and $z$ are in a component of $T-u_{i+1}$, say $T_{i+1}$, which is different from $S$ since $y$ and $z$ are in different components. It follows that $V(T_{i+1})\subset V(T_i)\setminus\{u_{i+1}\}\varsubsetneq V(T_i)$. Suppose now $R$ makes a move to the vertex $v_{i+1}$ along the edge $g\in E(\mathcal T)$. If $v_{i+1}$ is in a component of $T-u_{i+1}$ which is different from $T_{i+1}$, then the unique path connecting $v_i$ and $v_{i+1}$ in $T$ must contain $u_{i+1}$. Thus by Observation \ref{hypertreeobs}(a), we have $u_{i+1},v_i,v_{i+1}\in g$, and $C$ can capture $R$ in her next move by using $g$. Therefore, $v_{i+1}\in V(T_{i+1})$, and the robber's move is restricted to within $T_{i+1}$.

These rounds must end at some point with $C$ capturing $R$ as $i$ increases, since the set of vertices available for the robber to move to, in order to avoid capture, has strictly decreased after each round.
\end{proof}

We\, now\, consider\, the\, game\, of\, Cops\, and\, Robber\, played\, on\, complete\, multipartite hypergraphs. For $t\ge r\ge 2$ and $1\le n_1\le\cdots\le n_t$, the \emph{$r$-uniform complete $t$-partite hypergraph}, denoted by $\mathcal K^r_{n_1,\dots,n_t}$, has vertex set $\bigcup_{i=1}^t V_i$, where the sets $V_i$ are disjoint and $|V_i|=n_i$ for every $1\le i\le t$. Each set $V_i$ is a \emph{class} of $\mathcal K^r_{n_1,\dots,n_t}$. The edge set of $\mathcal K^r_{n_1,\dots,n_t}$ consists of all edges of size $r$ that meet every class  $V_i$ in at most one vertex. We may also call $\mathcal K^r_{n_1,\dots,n_t}$ an \emph{$r$-uniform complete multipartite hypergraph}.

If $2\le s\le t<r$, then we may also define a related hypergraph. Let $1\le n_1\le\cdots\le n_t$ be such that $\sum_{i=1}^t n_i\ge r$. The $r$-uniform hypergraph $\mathcal L^{r,s}_{n_1,\dots,n_t}$ is defined as follows. Again, the vertex set of $\mathcal L^{r,s}_{n_1,\dots,n_t}$ is $\bigcup_{i=1}^t V_i$, where the sets $V_i$ are disjoint and $|V_i|=n_i$ for every $1\le i\le t$. Each set $V_i$ is a \emph{class} of $\mathcal L^{r,s}_{n_1,\dots,n_t}$. The edge set of $\mathcal L^{r,s}_{n_1,\dots,n_t}$ consists of all edges of size $r$ that meet at least $s$ of the classes $V_i$. 

We have the following result about the cop numbers of these hypergraphs.

\begin{prop}\label{mpprp}
Let $t\ge 2$ and $1\le n_1\le\cdots\le n_t$.
\begin{enumerate}
\item[(a)] Let $t\ge r\ge 2$. Then $c(\mathcal K^r_{n_1,\dots,n_t})=1$ if $n_1=1$, and $c(\mathcal K^r_{n_1,\dots,n_t})=2$ if $n_1\ge 2$.
\item[(b)] Let $2\le s\le t<r$ and $\sum_{i=1}^t n_i\ge r$. Then $c(\mathcal L^{r,s}_{n_1,\dots,n_t})=1$. 
\end{enumerate}
\end{prop}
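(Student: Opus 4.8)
The plan is to reduce both parts to the corresponding statements about the $2$-section, via Observation~\ref{csameobs}, after first computing those $2$-sections. The key observation is that $[\mathcal K^r_{n_1,\dots,n_t}]_2=K_{n_1,\dots,n_t}$, the complete $t$-partite graph, and $[\mathcal L^{r,s}_{n_1,\dots,n_t}]_2=K_N$, the complete graph on $N:=\sum_{i=1}^t n_i$ vertices. For the first of these, two vertices of a common class $V_i$ lie in no common edge of $\mathcal K^r_{n_1,\dots,n_t}$, while two vertices $x\in V_i$ and $y\in V_j$ with $i\neq j$ do, since one can extend $\{x,y\}$ to an $r$-set by adding one vertex from each of some $r-2$ of the remaining $t-2\ge r-2$ classes (here $t\ge r$ is used). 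For the second, given any two vertices $x,y$ of $\mathcal L^{r,s}_{n_1,\dots,n_t}$, one can extend $\{x,y\}$ to an $r$-set meeting at least $s$ classes: there are at least $s-1$ classes not containing $x$, and since $r\ge s+1$ (from $s\le t<r$) there is room to include one vertex from each of them, after which the set is padded out to size exactly $r$ using $N\ge r$; the resulting edge contains both $x$ and $y$.

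Granting these two identifications, part (b) follows at once: a complete graph is cop-win, since any one of its vertices is a dominating vertex, so $c(\mathcal L^{r,s}_{n_1,\dots,n_t})=c(K_N)=1$. For part (a) when $n_1=1$, the unique vertex of $V_1$ is adjacent in $K_{n_1,\dots,n_t}$ to every other vertex, hence is a dominating vertex, and again $c(\mathcal K^r_{n_1,\dots,n_t})=c(K_{n_1,\dots,n_t})=1$.

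For part (a) when $n_1\ge2$, so that every class has at least two vertices, I will show $c(K_{n_1,\dots,n_t})=2$. For the upper bound, put two cops at $a\in V_1$ and $b\in V_2$; as $V_1\cap V_2=\emptyset$ we get $N[a]\cup N[b]=V(K_{n_1,\dots,n_t})$, so the robber begins equal or adjacent to one of the two cops, who captures him in the next round. For the lower bound, I will verify that $K_{n_1,\dots,n_t}$ has no corner: for $x\in V_i$ the set of non-neighbours of $x$ is the nonempty set $V_i\setminus\{x\}$, and if $N[x]\subseteq N[y]$ with $y\neq x$ then the case $y\in V_i$ gives $x\in N[x]\setminus N[y]$, while the case $y\in V_j$ with $j\neq i$ gives some $z\in V_j\setminus\{y\}$ lying in $N[x]\setminus N[y]$ — both impossible. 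Hence $K_{n_1,\dots,n_t}$ is not cop-win by Proposition~\ref{NWprp}(a), so its cop number, and therefore that of $\mathcal K^r_{n_1,\dots,n_t}$, equals $2$.

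The only point requiring care is the computation of the two $2$-sections, i.e.\ confirming that the needed edges really exist under the stated inequalities — especially in part (b), where one must satisfy the ``size exactly $r$'' and ``meets at least $s$ classes'' requirements simultaneously while $s\le t<r$ and $N\ge r$. After that, the argument only invokes the standard facts that a graph with a dominating vertex is cop-win and that a cop-win graph has a corner.
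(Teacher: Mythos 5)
Your proof is correct, and it reaches the same essential facts as the paper's proof (the upper bounds in both arguments come down to exhibiting one or two vertices whose closed neighbourhoods cover everything, and the edge-existence checks you flag as the delicate point are exactly the counting the paper does, e.g.\ its observation that $t+1\le r$ guarantees an edge through $u\in V_t$ and any other vertex in part (b)). The genuine difference is in how the two arguments are packaged and, more substantively, in the lower bound for part (a) when $n_1\ge 2$. The paper argues directly on the hypergraph and defeats a single cop with an explicit evasion strategy: the robber always ends his turn in the same class as the cop, at a different vertex. You instead pass to the $2$-section via Observation~\ref{csameobs}, identify it as the complete multipartite graph $K_{n_1,\dots,n_t}$, and show it has no corner, so it is not cop-win by Proposition~\ref{NWprp}(a). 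Your route is arguably more in the spirit of Section~\ref{charsect} and avoids having to verify that the robber's proposed move is always legal (e.g.\ that the cop can never step onto the robber's vertex from within the same class), which the paper leaves implicit; the paper's route is more self-contained and does not require first computing the $2$-sections. Both are complete proofs.
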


\begin{proof}
\, Let\, the\, classes\, $V_1,\dots,V_t$\, be\, defined\, as\, above,\, and\, write\, $\mathcal K$\, and\, $\mathcal L$\, for\, the\, two hypergraphs for simplicity.\\[1ex]
\indent (a) If $n_1=1$, then one cop can win the game by choosing the vertex $u\in V_1$ in the first round, so that $N[u]=V(\mathcal K)$. Thus $c(\mathcal K)=1$. Now let $n_1\ge 2$. If there is only one cop, then the robber can always avoid capture by making sure that at the end of every round, he is in the same class of $\mathcal K$ as the cop, with the two occupied vertices distinct. If there are two cops, then they can win the game by choosing vertices $u\in V_1$ and $v\in V_2$ in the first round, so that $N[u]\cup N[v]=V(\mathcal K)$. Thus $c(\mathcal K)=2$.\\[1ex]
\indent (b) Note that the given conditions imply that $n_t\ge 2$. Also, for any vertices $v_i\in V_i$ for $1\le i<t$ and $v_t,v_t'\in V_t$, there exists an edge that contains all of these vertices, since $t+1\le r$. Thus one cop can win the game by simply choosing a vertex $u\in V_t$ in the first round, so that $N[u]=V(\mathcal L)$. Thus $c(\mathcal L)=1$.
\end{proof}

\section{Cartesian products of hypergraphs}\label{prodsect}

In this section, we shall prove some results concerning the cop number of the Cartesian product of hypergraphs.

For graphs $G$ and $H$, the \emph{Cartesian product} $G\,\square\,H$ is a graph with vertex set $V(G)\times V(H)$, and vertices $(a, b)$ and $(a', b')$ are adjacent if $a = a'$ and $bb'\in E(H)$, or $aa'\in E(G)$ and $b = b'$. This definition extends easily to hypergraphs.  For hypergraphs $\mathcal G$ and $\mathcal H$, the \emph{Cartesian product} $\mathcal G\,\square\,\mathcal H$ is a hypergraph with vertex set $V(\mathcal G)\times V(\mathcal H)$. The edge set of $\mathcal G\,\square\,\mathcal H$ consists of all edges of the form $\{a\}\times f$ for some $a\in V(\mathcal G)$ and $f\in E(\mathcal H)$, or the form $e\times \{b\}$ for some $e\in E(\mathcal G)$ and $b\in V(\mathcal H)$. 


We may extend the Cartesian product to more than two hypergraphs. The operation $\,\square\,$ is both associative and commutative. Therefore, for hypergraphs $\mathcal H_1,\dots, \mathcal H_d$, the notation $\mathcal H_1\,\square\,\cdots\,\square\,\mathcal H_d$ makes sense and is invariant under any permutation of $\mathcal H_1,\dots, \mathcal H_d$. If the hypergraphs\, $\mathcal H_1,\dots, \mathcal H_d$\, are\, connected,\, then\, $\mathcal H_1\,\square\,\cdots\,\square\,\mathcal H_d$\, is\, also\, connected.\, We\, may visualise the structure of $\mathcal H_1\,\square\,\cdots\,\square\,\mathcal H_d$ as follows. We have a $|V(\mathcal H_1)|\times\cdots\times |V(\mathcal H_d)|$ grid of vertices, and we think of the grid as a $d$-dimensional structure with $d$ directions. For every $1\le i\le d$, we add a copy of $\mathcal H_i$ to every set of vertices in the $i$th direction.

There are many results about the cop number of the Cartesian product of graphs. For the cases of a product of trees and a product of complete graphs, the exact cop numbers have been determined by Maamoun and Meyniel \cite{MM87}, and Neufeld and Nowakowski \cite{NN98}. We state their results in the following theorem, as well as a result of To\v{s}i\'c \cite{Tos88}.

\begin{theorem}\label{prodgraphthm}
\indent
\begin{enumerate}
\item[(a)] \textup{\cite{Tos88}} Let $G$ and $H$ be connected graphs. Then 
\[
\max(c(G),c(H))\le c(G\,\square\,H) \le c(G)+c(H).
\] 
\item[(b)] \textup{\cite{MM87}} Let $T_1,\dots, T_d$ be non-trivial trees. Then 
\[
c(T_1\,\square\cdots\square\,T_d)=\Big\lceil\frac{d+1}{2}\Big\rceil.
\]
\item[(c)] \textup{\cite{NN98}} Let $G_1,\dots,G_d$ be complete graphs on at least $3$ vertices. Then 
\[
c(G_1\,\square\cdots\square\,G_d)=d.
\]
\end{enumerate}
\end{theorem}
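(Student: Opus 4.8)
The three parts are classical (To\v si\'c; Maamoun--Meyniel; Neufeld--Nowakowski), so the plan is to reconstruct their proofs, treating the two inequalities in each statement separately. For the lower bounds in (a) I would use \emph{retracts}. Fixing $b_0\in V(H)$, the map $r(a,b)=(a,b_0)$ is a graph homomorphism from $G\,\square\,H$ onto the subgraph $G\times\{b_0\}\cong G$ that fixes $G\times\{b_0\}$ pointwise; hence $G$ is a retract of $G\,\square\,H$. Since the cop number does not increase under retraction — given a winning $k$-cop strategy on $G\,\square\,H$, the cops playing on $G\times\{b_0\}$ obtain a winning strategy on $G$ by applying $r$ to every imagined move — we get $c(G)\le c(G\,\square\,H)$, and symmetrically $c(H)\le c(G\,\square\,H)$, which is the lower bound $\max(c(G),c(H))\le c(G\,\square\,H)$.

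For the upper bound $c(G\,\square\,H)\le c(G)+c(H)$ I would use a \emph{shadow strategy} split into two teams. The crucial structural fact is that every move in $G\,\square\,H$ changes exactly one coordinate, so the projection of the robber's walk onto $G$ (resp. $H$) is a walk in $G$ (resp. $H$) that, each round, either traverses an edge or stays put. Team $A$ of $c(G)$ cops plays a winning $G$-strategy against the robber's $G$-projection until one cop $a^\ast$ has $G$-coordinate equal to the robber's; thereafter $a^\ast$ maintains this equality forever, copying each $G$-move of the robber and idling (in its $G$-coordinate) when the robber moves in $H$. Team $B$ of $c(H)$ cops then plays a winning $H$-strategy against the robber's $H$-projection while additionally keeping each of its cops' $G$-coordinate equal to that of $a^\ast$. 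These two duties never collide in a single round: when the robber moves in $G$, team $B$ spends its move copying $a^\ast$ in the $G$-coordinate (and the $H$-subgame passes), and when the robber moves in $H$, team $B$ advances the $H$-strategy (its $G$-coordinates being already correct). When a team-$B$ cop finally matches the robber's $H$-coordinate it also matches the $G$-coordinate, so it captures. The only delicate point is the one-round lag from the cops-move-first convention, which is absorbed because cop-win strategies tolerate the robber passing; I would relegate this to routine bookkeeping.

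Part (c) then follows quickly in one direction and needs a separate argument in the other. Since each $G_i$ is complete and hence cop-win, iterating the upper bound of (a) gives $c(G_1\,\square\cdots\square\,G_d)\le\sum_i c(G_i)=d$. For the matching lower bound I would have the robber maintain the invariant that, after each of his moves, he differs from every cop in at least two coordinates; because a cop move changes one coordinate and each $G_i$ is complete, such a cop cannot capture on its next move. To restore the invariant after the cops move, the robber changes one coordinate (or stays) so as to be two-apart from all $\le d-1$ cops at once; with $d$ coordinates, at most $d-1$ constraints, and at least two spare values in every class (as $n_i\ge 3$), a greedy/pigeonhole choice of coordinate and value always succeeds, so the robber evades $d-1$ cops and $c(G_1\,\square\cdots\square\,G_d)\ge d$.

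Part (b) is where the real work lies. For the lower bound I would reduce to the hypercube: $K_2$ is a retract of every non-trivial tree $T$ (a proper $2$-colouring of $T$ fixing a chosen edge is a retraction onto that edge), and the coordinatewise map is a retraction of $T_1\,\square\cdots\square\,T_d$ onto $Q_d=K_2^{\,\square d}$; by retract-monotonicity $c(T_1\,\square\cdots\square\,T_d)\ge c(Q_d)$. On $Q_d$ (vertices binary strings, adjacency Hamming distance $1$) the robber keeps Hamming distance $\ge 2$ from every cop after each of his moves; a short count shows each cop forbids at most two of the robber's $d+1$ options (stay, or flip one of $d$ bits), so whenever the number of cops $m$ satisfies $2m<d+1$ the robber has a free move, giving $c(Q_d)\ge\lceil(d+1)/2\rceil$. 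The upper bound $c(T_1\,\square\cdots\square\,T_d)\le\lceil(d+1)/2\rceil$ is the \textbf{main obstacle}: iterating (a) only yields the weaker bound $d$, and the saving — the Maamoun--Meyniel phenomenon that two tree-factors can be absorbed by a single extra cop — requires proving the recursion $c(T_1\,\square\cdots\square\,T_d)\le 1+c(T_3\,\square\cdots\square\,T_d)$, with bases $c(\text{one tree})=1$ and $c(T_1\,\square\,T_2)=2$. The content of this recursion is a pairing strategy in which one additional cop, together with a cop freed up once the lower-dimensional subproblem has been pinned down, corners the robber in the $T_1\,\square\,T_2$ plane, exploiting that a robber's reachable subtree in a tree only ever shrinks. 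Making this simultaneous progress precise — scheduling which cop moves in which coordinate each round so that no coordinate is ever surrendered — is the technical heart of the theorem, and I expect it to be the hardest step.
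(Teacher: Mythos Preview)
The paper does not prove this theorem at all: it is stated as a compilation of results from the literature (To\v{s}i\'c, Maamoun--Meyniel, Neufeld--Nowakowski) and is used without proof. So there is no ``paper's own proof'' to compare your attempt against.

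Your reconstruction of the classical arguments is largely correct and standard. The retract approach for the lower bounds in (a) and (b), the two-team shadow strategy for the upper bound in (a), and the Hamming-distance-$2$ invariant on $Q_d$ for the lower bound in (b) are all the right ideas. Your identification of the Maamoun--Meyniel recursion $c(T_1\,\square\cdots\square\,T_d)\le 1+c(T_3\,\square\cdots\square\,T_d)$ as the crux of (b) is exactly what the paper itself exploits: it isolates the key ingredient as Lemma~\ref{MMlem} (one cop catches a robber in $T_1\,\square\,T_2$ provided the robber cannot stall forever) and then runs the same ``peel off two tree factors per extra cop'' induction in its proof of the hypergraph extension, Theorem~\ref{prodhgthm}(b). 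So although the paper does not prove Theorem~\ref{prodgraphthm}(b), its proof of Theorem~\ref{prodhgthm}(b) is essentially the argument you sketch.

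One place where your sketch is thin is the lower bound in (c). Saying that ``a greedy/pigeonhole choice of coordinate and value always succeeds'' glosses over a real issue: after choosing a coordinate $k$ not equal to the single differing coordinate of any distance-$1$ cop (at most $d-1$ forbidden coordinates, fine), you must also choose a value $v\ne x_k$ in that coordinate that does not coincide with $y^j_k$ for any cop $j$ currently at distance exactly $2$ and differing from the robber in coordinate $k$; otherwise you drop to distance $1$ from that cop. With only $|V(G_k)|\ge 3$ values available and potentially several such cops, this second step is not automatic from the count you give, and the actual Neufeld--Nowakowski argument handles the bookkeeping more carefully. The approach is right, but this step would need to be written out.
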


Here, we consider the cop number of the Cartesian product of hypergraphs. Siriwong, Boonklurb and Singhun \cite{SBS20} proved some results in this direction. Namely, for the Cartesian product, and versions of the direct and  strong products of hypergraphs, they showed whether or not two cop-win hypergraphs yields another cop-win hypergraph under each of these products. In Theorem \ref{prodhgthm} below, we shall extended the results of Theorem \ref{prodgraphthm} to hypergraphs.

\begin{theorem}\label{prodhgthm}
\indent
\begin{enumerate}
\item[(a)] Let $\mathcal G$ and $\mathcal H$ be connected hypergraphs on at least $2$ vertices. Then 
\[
\max(c(\mathcal G),c(\mathcal H),2)\le c(\mathcal G\,\square\,\mathcal H) \le c(\mathcal G)+c(\mathcal H).
\]
In particular, if $c(\mathcal G)=c(\mathcal H)=1$, then $c(\mathcal G\,\square\,\mathcal H)=2$.
\item[(b)] Let $p\ge 0$ and $q\ge 1$. Let $T_1,\dots,T_p$ be non-trivial trees, and $\mathcal H_1,\dots, \mathcal H_q$ be connected hypergraphs with $c(\mathcal H_j)=1$ and $\delta([\mathcal H_j]_2)\ge 2$ for every $1\le j\le q$. Let $\mathcal H$ be the Cartesian product of these trees and hypergraphs. Then 
\[
c(\mathcal H)=\Big\lceil\frac{p}{2}\Big\rceil+q.
\]
\end{enumerate}
\end{theorem}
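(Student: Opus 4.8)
The plan is to reduce both parts to known graph-theoretic results by exploiting Observation \ref{csameobs}, which tells us $c(\mathcal H)=c([\mathcal H]_2)$, together with a careful understanding of how the $2$-section interacts with the Cartesian product. The key structural fact I would establish first is a lemma describing $[\mathcal G\,\square\,\mathcal H]_2$ in terms of $[\mathcal G]_2$ and $[\mathcal H]_2$: since every edge of $\mathcal G\,\square\,\mathcal H$ is of the form $\{a\}\times f$ or $e\times\{b\}$, each such edge induces in the $2$-section a clique sitting inside a single ``row'' $\{a\}\times V(\mathcal H)$ or ``column'' $V(\mathcal G)\times\{b\}$. Consequently $[\mathcal G\,\square\,\mathcal H]_2$ is precisely the graph on $V(\mathcal G)\times V(\mathcal H)$ whose restriction to each row is a copy of $[\mathcal H]_2$, whose restriction to each column is a copy of $[\mathcal G]_2$, and which has no other edges — in other words $[\mathcal G\,\square\,\mathcal H]_2 = [\mathcal G]_2\,\square\,[\mathcal H]_2$. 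This identity (which extends by associativity to any finite product) is the workhorse for everything that follows.

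For part (a), granted $[\mathcal G\,\square\,\mathcal H]_2=[\mathcal G]_2\,\square\,[\mathcal H]_2$, the bounds $\max(c(\mathcal G),c(\mathcal H))\le c(\mathcal G\,\square\,\mathcal H)\le c(\mathcal G)+c(\mathcal H)$ follow immediately from Theorem \ref{prodgraphthm}(a) applied to the graphs $[\mathcal G]_2$ and $[\mathcal H]_2$ together with Observation \ref{csameobs}. The only extra content is the lower bound of $2$ when $\mathcal G$ and $\mathcal H$ each have at least two vertices. For this I would argue directly that $\mathcal G\,\square\,\mathcal H$ is not cop-win: by Theorem \ref{charthm} it suffices to show $[\mathcal G]_2\,\square\,[\mathcal H]_2$ is not dismantlable, or equivalently give a robber escape strategy against one cop. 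Given the cop at $(a,b)$ and the robber's turn, the robber can always move within his current row or column so as to end a round in the same row \emph{or} the same column as the cop but at a distinct vertex — the standard ``product of two nontrivial graphs has cop number $\ge 2$'' argument — using that each factor has at least two vertices so each row and column has at least two vertices. This gives $c(\mathcal G\,\square\,\mathcal H)\ge 2$, and the ``in particular'' clause is then the case $c(\mathcal G)=c(\mathcal H)=1$ of the sandwiched inequality.

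For part (b), the identity above gives $[\mathcal H]_2 = T_1\,\square\cdots\square\,T_p\,\square\,[\mathcal H_1]_2\,\square\cdots\square\,[\mathcal H_q]_2$, and by Observation \ref{csameobs} we must compute the cop number of this product of graphs. The upper bound $c([\mathcal H]_2)\le\lceil p/2\rceil+q$ should follow by combining the Maamoun--Meyniel cop strategy on the tree factors (Theorem \ref{prodgraphthm}(b), giving $\lceil (p+1)/2\rceil$... — actually I would pair up the $p$ trees two at a time, spending $\lceil p/2\rceil$ cops, one cop pair handling two tree-coordinates via a product-of-two-trees strategy) with one cop per hypergraph factor $[\mathcal H_j]_2$, each of which is cop-win so a single cop can pin down that coordinate; a ``shadow'' argument projecting the robber onto each factor lets the cops drive the robber into a shrinking set of coordinates one group at a time. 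The lower bound $c([\mathcal H]_2)\ge\lceil p/2\rceil+q$ is where the hypotheses $\delta([\mathcal H_j]_2)\ge 2$ enter and is, I expect, the main obstacle: the robber needs a degree-of-freedom to survive in each coordinate against any one cop, and in a tree coordinate two trees must share a cop (hence $\lceil p/2\rceil$), while in each hypergraph coordinate the condition $\delta([\mathcal H_j]_2)\ge 2$ — meaning $[\mathcal H_j]_2$ has no vertex of degree $\le 1$, so it contains a cycle and in particular is not a path or a single edge — guarantees the robber a genuine evasion move in that coordinate whenever no cop is currently ``committed'' to it. I would formalize this with a potential/counting argument: if fewer than $\lceil p/2\rceil+q$ cops are present, then at every stage some factor has no cop within bounded distance of the robber's projection, and the robber uses the minimum-degree-$2$ property (for an $\mathcal H_j$ factor) or the standard two-trees lower bound (for a pair of tree factors) to maintain this invariant forever. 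The delicate point is handling the tree factors and hypergraph factors simultaneously within one invariant; I would set it up so that the robber treats the $\lceil p/2\rceil$ ``tree-pair'' coordinates and the $q$ ``hypergraph'' coordinates uniformly as coordinates each requiring a dedicated cop, and derive a contradiction from a pigeonhole count on cops versus coordinates.
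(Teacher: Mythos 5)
Your overall plan (reduce to $2$-sections via $[\mathcal G\,\square\,\mathcal H]_2=[\mathcal G]_2\,\square\,[\mathcal H]_2$, quote To\v{s}i\'c's bounds for the sandwich in (a), and pair tree factors \`a la Maamoun--Meyniel in (b)) is the same as the paper's, but two points need repair. In part (a), your robber invariant is wrong as stated: ending a round ``in the same row or the same column as the cop but at a distinct vertex'' does not prevent capture, because a distinct vertex in the cop's row can be adjacent to the cop within that row, and the cop then wins on her next move. The correct (and easily maintained) invariant is that the robber differs from the cop in \emph{both} coordinates: the cop's move changes at most one coordinate, so at most one of the two conditions can be violated after her move, and the robber restores it by moving in the violated coordinate (possible since each factor is connected on at least $2$ vertices); since every edge of the product lies in a single row or column, differing in both coordinates guarantees non-adjacency.

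In part (b) both halves are left at the level of intention, and the proposed mechanism for the lower bound does not work. A ``potential'' invariant of the form ``some factor has no cop near the robber's projection'' cannot be maintained: every cop has a shadow in every factor, and since each $[\mathcal H_j]_2$ is cop-win, a single cop's shadow will eventually catch the robber's shadow in that factor, so the invariant fails even against one cop. The argument that actually works (and is where $\delta([\mathcal H_j]_2)\ge 2$ enters) is a local, per-round count of \emph{escape vertices}: when a cop threatens the robber along an edge in one direction, a tree direction may offer only one escape vertex (if the robber's tree-coordinate is a leaf), so one cop can guard two tree directions at once, whereas a direction $\mathcal H_j$ with $\delta([\mathcal H_j]_2)\ge 2$ always offers at least two escape vertices, forcing a dedicated cop inside the robber's current copy of $\mathcal H_j$; totalling these shows that the $\lceil p/2\rceil+q-2$ cops other than the threatening one can never cover all escape vertices. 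One also needs an initial counting inequality to show the robber can find a safe starting vertex. For the upper bound, ``one cop per factor pins down that coordinate'' glosses over the essential difficulty: a cop pair playing the two-tree strategy of Lemma \ref{MMlem} only wins if the robber is \emph{forced} to move in those tree directions from time to time, and this must be arranged by having the remaining cops threaten him whenever he idles there. The paper does this by induction on even $p$, peeling off $T_1\,\square\,T_2$ and letting the remaining $\frac{p}{2}+q-1$ cops punish the robber for confining his moves to the other directions. Without this forcing step the upper bound is not proved.
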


Theorem\, \ref{prodhgthm}(b)\, has\, the\, following\, corollary,\, the\, two\, parts\, of\, which\, can\, be\, seen\, as generalisations of Theorem \ref{prodgraphthm}(b) and (c).

\begin{corollary}\label{prodhgcor}
\indent
\begin{enumerate}
\item[(a)] Let $\mathcal T_1,\dots, \mathcal T_d$ be hypertrees with anti-rank at least $3$. Then 
\[
c(\mathcal T_1\,\square\cdots\square\,\mathcal T_d)=d.
\]
\item[(b)] Let $p\ge 0$ and $q\ge 1$. Let $G$ be the Cartesian product of $p$ copies of $K_2$ (that is, $G$ is the discrete cube $Q_p$), and $\mathcal H_1,\dots, \mathcal H_q$ be uniform complete hypergraphs with $r(\mathcal H_j)\ge 2$ and $|V(\mathcal H_j)|\ge 3$ for all $1\le j\le q$ (two different $\mathcal H_j$ may have different ranks). Then 
\[
c(G\,\square\,\mathcal H_1\,\square\cdots\square\,\mathcal H_q)=\Big\lceil\frac{p}{2}\Big\rceil+q.
\]
\end{enumerate}
\end{corollary}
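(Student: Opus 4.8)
The plan is to deduce both parts directly from Theorem \ref{prodhgthm}(b) by verifying that the hypergraphs in question satisfy its hypotheses, with $p=0$ in part (a) and the given $p$ in part (b). The main task is therefore to check two conditions for each relevant factor: that the factor is a connected hypergraph with cop number $1$, and that the $2$-section of the factor has minimum degree at least $2$.

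For part (a), each $\mathcal T_i$ is a hypertree, so it is connected by definition, and $c(\mathcal T_i)=1$ by Theorem \ref{hypertreethm}. It remains to show $\delta([\mathcal T_i]_2)\ge 2$. Here I would use the anti-rank hypothesis $s(\mathcal T_i)\ge 3$: fix a host tree $T_i$ and take any vertex $v\in V(\mathcal T_i)$. Since $\mathcal T_i$ is connected and has at least two vertices, $v$ lies in some edge $e$, and $|e|\ge s(\mathcal T_i)\ge 3$, so $e$ contributes at least two neighbours of $v$ in $[\mathcal T_i]_2$. Hence $\delta([\mathcal T_i]_2)\ge 2$, and Theorem \ref{prodhgthm}(b) with $p=0$, $q=d$ gives $c(\mathcal T_1\,\square\cdots\square\,\mathcal T_d)=\lceil 0/2\rceil+d=d$.

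For part (b), write $G=Q_p$ as the Cartesian product of $p$ copies of the non-trivial tree $K_2$, so the tree factors are accounted for by $T_1,\dots,T_p$ in Theorem \ref{prodhgthm}(b). For each $\mathcal H_j$, a uniform complete hypergraph with $r(\mathcal H_j)=:r_j\ge 2$ and $|V(\mathcal H_j)|\ge 3$: it is connected since its $2$-section is a complete graph on at least $3$ vertices (any two vertices lie in a common edge because $|V(\mathcal H_j)|\ge r_j$), and $c(\mathcal H_j)=1$ by Proposition \ref{mpprp}(b) if $r_j>$ some class bound, but more simply $\mathcal H_j$ is the $r_j$-uniform complete hypergraph, whose $2$-section is the complete graph $K_{|V(\mathcal H_j)|}$, which is cop-win, so $c(\mathcal H_j)=1$ by Observation \ref{csameobs}; equivalently one cop wins by sitting in any edge, which dominates all vertices. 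Finally $[\mathcal H_j]_2=K_{|V(\mathcal H_j)|}$ has minimum degree $|V(\mathcal H_j)|-1\ge 2$. Thus the hypotheses of Theorem \ref{prodhgthm}(b) hold with the same $p$ and $q$, yielding $c(G\,\square\,\mathcal H_1\,\square\cdots\square\,\mathcal H_q)=\lceil p/2\rceil+q$.

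The only genuinely non-routine point is the minimum-degree verification for the hypertrees in part (a); everything else is an immediate substitution into Theorem \ref{prodhgthm}(b) together with the already-established facts that hypertrees and uniform complete hypergraphs are cop-win. I expect no obstacle beyond making sure the anti-rank hypothesis is used correctly to force each vertex of a hypertree to have at least two neighbours in the $2$-section.
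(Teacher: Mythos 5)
Your proposal is correct and follows essentially the same route as the paper, which derives both parts by substituting into Theorem \ref{prodhgthm}(b) (using Theorem \ref{hypertreethm} for the hypertree factors); your verification that the anti-rank condition forces $\delta([\mathcal T_i]_2)\ge 2$, and that $[\mathcal H_j]_2=K_{|V(\mathcal H_j)|}$ handles the complete-hypergraph factors, is exactly the routine checking the paper leaves implicit.
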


Note that Corollary \ref{prodhgcor}(a) follows from Theorems \ref{hypertreethm} and \ref{prodhgthm}(b). Also, Theorem \ref{prodgraphthm}(c) omitted the case that some of the complete graphs may have $2$ vertices. This missing case is covered in Corollary \ref{prodhgcor}(b). Before we prove Theorem \ref{prodhgthm}, we need two results. Proposition \ref{prodhgprp} below is fairly trivial, while Lemma \ref{MMlem} is a result of Maamoun and Meyniel \cite{MM87}.

\begin{prop}\label{prodhgprp}
Let $\mathcal G$ and $\mathcal H$ be hypergraphs. Then $[\mathcal G\,\square\,\mathcal H]_2=[\mathcal G]_2\,\square\,[\mathcal H]_2$.
\end{prop}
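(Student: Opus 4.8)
The plan is to prove the set equality $[\mathcal G\,\square\,\mathcal H]_2=[\mathcal G]_2\,\square\,[\mathcal H]_2$ by first noting both graphs have the same vertex set $V(\mathcal G)\times V(\mathcal H)$, and then showing the edge sets coincide. A generic edge of either side joins two distinct vertices $(a,b)$ and $(a',b')$, so I would fix such a pair and chase the definitions in both directions.

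First I would take $(a,b)(a',b')\in E([\mathcal G\,\square\,\mathcal H]_2)$. By definition of the $2$-section, this means $(a,b),(a',b')$ lie in a common edge of $\mathcal G\,\square\,\mathcal H$, which by the definition of the Cartesian product of hypergraphs is either $\{c\}\times f$ for some $c\in V(\mathcal G)$, $f\in E(\mathcal H)$, or $e\times\{d\}$ for some $e\in E(\mathcal G)$, $d\in V(\mathcal H)$. In the first case $a=a'=c$ and $b,b'\in f$, so $bb'\in E([\mathcal H]_2)$ (note $b\ne b'$ since the two vertices are distinct), hence $(a,b)(a',b')\in E([\mathcal G]_2\,\square\,[\mathcal H]_2)$; the second case is symmetric. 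Conversely, suppose $(a,b)(a',b')\in E([\mathcal G]_2\,\square\,[\mathcal H]_2)$. By the definition of the Cartesian product of graphs, either $a=a'$ and $bb'\in E([\mathcal H]_2)$, or $aa'\in E([\mathcal G]_2)$ and $b=b'$. In the first case $bb'\in E([\mathcal H]_2)$ gives some $f\in E(\mathcal H)$ with $b,b'\in f$, so $\{a\}\times f\in E(\mathcal G\,\square\,\mathcal H)$ contains both $(a,b)$ and $(a',b')$, whence $(a,b)(a',b')\in E([\mathcal G\,\square\,\mathcal H]_2)$; again the other case is symmetric.

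There is no real obstacle here — the statement is, as the authors note, ``fairly trivial'' — so the write-up is just a careful double-inclusion argument. The only point requiring a little attention is keeping track of the distinctness of $(a,b)$ and $(a',b')$ so that the resulting pair of vertices really is an edge (not a loop) of the $2$-section in each direction, and correctly handling the two symmetric cases arising from the two shapes of Cartesian-product edges. I would state the vertex-set agreement in one line, then present the edge-set argument as the forward and backward inclusions described above.

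\begin{proof}
Both $[\mathcal G\,\square\,\mathcal H]_2$ and $[\mathcal G]_2\,\square\,[\mathcal H]_2$ have vertex set $V(\mathcal G)\times V(\mathcal H)$, so it suffices to show that they have the same edge set. Let $(a,b)$ and $(a',b')$ be distinct vertices.

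Suppose $(a,b)(a',b')\in E([\mathcal G\,\square\,\mathcal H]_2)$. Then $(a,b)$ and $(a',b')$ lie in a common edge of $\mathcal G\,\square\,\mathcal H$, which is either $\{c\}\times f$ for some $c\in V(\mathcal G)$ and $f\in E(\mathcal H)$, or $e\times\{d\}$ for some $e\in E(\mathcal G)$ and $d\in V(\mathcal H)$. In the first case, $a=a'=c$ and $b,b'\in f$; since $b\neq b'$, we have $bb'\in E([\mathcal H]_2)$, so $(a,b)(a',b')\in E([\mathcal G]_2\,\square\,[\mathcal H]_2)$. The second case is symmetric, giving $aa'\in E([\mathcal G]_2)$ and $b=b'$, and again $(a,b)(a',b')\in E([\mathcal G]_2\,\square\,[\mathcal H]_2)$.

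Conversely, suppose $(a,b)(a',b')\in E([\mathcal G]_2\,\square\,[\mathcal H]_2)$. Then either $a=a'$ and $bb'\in E([\mathcal H]_2)$, or $aa'\in E([\mathcal G]_2)$ and $b=b'$. In the first case, there exists $f\in E(\mathcal H)$ with $b,b'\in f$, so $\{a\}\times f\in E(\mathcal G\,\square\,\mathcal H)$ and contains both $(a,b)$ and $(a',b')$, whence $(a,b)(a',b')\in E([\mathcal G\,\square\,\mathcal H]_2)$. The second case is symmetric. Therefore the two graphs have the same edge set, and the result follows.
\end{proof}
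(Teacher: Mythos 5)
Your proof is correct; the paper itself omits the proof of Proposition \ref{prodhgprp} (describing it only as ``fairly trivial''), and your double-inclusion definition chase, including the care taken with distinctness of $(a,b)$ and $(a',b')$, is exactly the intended argument.
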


\begin{lemma}\label{MMlem}\textup{\cite{MM87}}
Let $T_1$ and $T_2$ be trees. Then in $T_1\,\square\,T_2$, one cop may capture the robber if the robber is not permitted to remain at a vertex indefinitely.
\end{lemma}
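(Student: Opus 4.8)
The plan is to prove Lemma~\ref{MMlem} by giving an explicit winning strategy for a single cop in $T_1\,\square\,T_2$, under the assumption that the robber cannot stay put indefinitely. The key structural fact I would exploit is that each of the two trees is itself a cop-win graph (being dismantlable), so a single cop controlling one coordinate can shadow the robber's projection onto that tree. Write the robber's position as $(a,b)\in V(T_1)\times V(T_2)$ and the cop's position as $(c,d)$. A move in the product changes exactly one coordinate, so the projection of any walk onto $T_i$ is a walk in $T_i$. The idea is a two-phase strategy: first the cop equalises the first coordinate (guaranteeing $c=a$ forever after), then, while maintaining $c=a$, she equalises the second coordinate to finish.

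The first phase is essentially the standard single-cop strategy on the tree $T_1$, applied to the first coordinates only. First I would have the cop play on $T_1$ as if it were a stand-alone Cop and Robber game, ignoring the second coordinate: whenever the robber's move changes his first coordinate, the cop responds to chase the projection $a$ in $T_1$, and since $T_1$ is cop-win the cop's first coordinate $c$ will reach and then stay locked onto $a$ in finitely many rounds. The crucial point needing care is that a robber move which changes only his \emph{second} coordinate leaves $a$ fixed, so it does not help him escape in the first coordinate; the cop can treat such a move as a ``pass'' by the robber's $T_1$-projection and continue her $T_1$-pursuit unimpeded. Once $c=a$ is achieved, the cop can preserve it: any robber move changing $a$ can be mirrored by the cop moving along the corresponding edge of $T_1$ in her first coordinate, and any robber move changing only $b$ is answered by the cop adjusting her \emph{second} coordinate, so that the invariant $c=a$ is never broken while the cop also makes progress in $T_2$.

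For the second phase, with the invariant $c=a$ maintained, the cop devotes her moves to chasing the second coordinate in $T_2$. She now plays the standard cop-win strategy on $T_2$ against the robber's projection $b$, again using that $T_2$ is cop-win. The place where the no-pass hypothesis is indispensable is exactly here: while the cop is closing in on $b$ in $T_2$, the robber might try to stall by repeatedly toggling his first coordinate $a$ back and forth, forcing the cop to keep re-synchronising $c=a$ and never letting the $T_2$-pursuit advance. The hypothesis that the robber may not remain at a vertex indefinitely rules out such indefinite stalling, so across infinitely many rounds the robber must make infinitely many genuine $T_2$-moves, and the cop's monotone progress in the tree $T_2$ (distance to the robber's projection strictly decreasing, as in the single-cop tree strategy) then forces capture.

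The main obstacle, and the heart of the argument, is the bookkeeping that shows the two pursuits do not interfere: I must verify that maintaining the first-coordinate lock $c=a$ costs the cop nothing in the second coordinate except when the robber actually moves in $T_1$, and conversely that the robber cannot convert his one move per round into simultaneous progress in both directions. The cleanest way to handle this is to define a potential function such as $\mathrm{dist}_{T_1}(c,a)+\mathrm{dist}_{T_2}(d,b)$ and argue that the cop can always respond so that this potential does not increase, and strictly decreases whenever the robber makes a ``real'' move that is not a pure stall. Combined with the no-pass assumption, which guarantees infinitely many real moves, the potential must eventually hit $0$, giving capture. I would lean on Proposition~\ref{prodhgprp} and the dismantlability characterisation of Theorem~\ref{charthm} only implicitly here; the explicit tree-shadowing argument is the more transparent route for this particular lemma.
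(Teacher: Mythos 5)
Your two-phase strategy has a genuine gap, and it sits exactly where you placed the weight of the hypothesis. The condition in Lemma~\ref{MMlem} forbids the robber from \emph{remaining at the same vertex} indefinitely, i.e.\ from passing forever; it does not forbid him from moving forever. A robber who oscillates between two adjacent vertices $a$ and $a'$ of $T_1$, so that his position toggles between $(a,b)$ and $(a',b)$, moves on every turn and therefore satisfies the hypothesis, yet he defeats your Phase 2: to maintain your invariant $c=a$ the cop must spend her single move each round re-mirroring in the first coordinate, so she never advances in $T_2$ and the game lasts forever. Already on $T_1\,\square\,T_2=P_2\,\square\,P_2=C_4$ this robber evades your cop indefinitely. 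The same misreading undermines your fallback potential argument: a robber move \emph{away} from the cop is a perfectly ``real'' move (not a stall), and against it the cop cannot make $\mathrm{dist}_{T_1}(c,a)+\mathrm{dist}_{T_2}(d,b)$ strictly decrease, only keep it constant; and if the cop simply responds every round (never waits), then on $C_4$ the circling robber keeps the potential alternating $2,1,2,1,\dots$ forever --- this is precisely why $c(C_4)=2$ in the unrestricted game.

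The missing idea --- which is the entire content of the Maamoun--Meyniel strategy described in the paper --- is a \emph{parity rule} telling the cop when to wait. Writing $d_i$ for the distance in $T_i$ between the projections of cop and robber, the cop passes whenever $d_1+d_2$ is even, and when $d_1+d_2$ is odd she moves so as to decrease $\max(d_1,d_2)$. Then $d_1+d_2$ never increases after her move; any robber move toward her in either coordinate costs him $2$; and oscillation no longer stalls the game, because the toggle that raises $d_1$ from $0$ to $1$ leaves the max equal to $d_2$, so the cop answers with genuine progress in $T_2$, while the toggle back is itself a toward-move costing the robber $2$. The no-stalling hypothesis is used only to force the robber to move while the cop is waiting on an even total; since the trees are finite, the robber cannot make away-moves indefinitely, so strict decreases occur from time to time and the distance reaches $0$. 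Your proposal, in which the cop either locks a coordinate or responds every round, lacks this waiting mechanism, and it cannot be repaired without it.
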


In Lemma \ref{MMlem}, the winning strategy for the cop, as stated in \cite{MM87}, is as follows. Suppose that at the end of a round, the cop is at a distance $d_1+d_2$ from the robber in $T_1\,\square\,T_2$, where $d_i$ is the distance between the two vertices in $T_i$ corresponding to the cop and the robber, for $i=1,2$. The cop moves as follows. If $d_1 + d_2$ is even, then she passes her turn. If $d_1 + d_2$ is odd, then she moves to a vertex in $T_1\,\square\,T_2$ which decreases $\max(d_1, d_2)$. This strategy implies that the distance between the cop and the robber in $T_1\,\square\,T_2$ does not increase, and from time to time will strictly decrease until it reaches zero.

\begin{proof}[Proof of Theorem \ref{prodhgthm}]
(a) Using Observation \ref{csameobs}, Theorem \ref{prodgraphthm}(a) and Proposition \ref{prodhgprp}, it is easy to obtain $\max(c(\mathcal G),c(\mathcal H))\le c(\mathcal G\,\square\,\mathcal H) \le c(\mathcal G)+c(\mathcal H)$. It remains to prove that $c(\mathcal G\,\square\,\mathcal H)\ge 2$. Suppose that one cop $C$ plays the game against the robber $R$ on $\mathcal G\,\square\,\mathcal H$. Since $|V(\mathcal G)|,|V(\mathcal H)|\ge 2$, the robber $R$ can always avoid capture by ensuring that at the end of each round, if $C$ is at some $(a,b)$, then $R$ is at some $(a',b')$ where $a\neq a'$ and $b\neq b'$.\\[1ex]
\indent (b) Our proof of this result follows ideas of Maamoun and Meyniel \cite{MM87}. The case $(p,q)=(0,1)$ holds, so we may assume that $(p,q)\neq(0,1)$.


We first prove that $\lceil\frac{p}{2}\rceil+q-1$ cops are insufficient to capture the robber $R$ in $\mathcal H$. It is not difficult to verify the following inequality, say by induction on $p+q$:
\begin{equation}\label{prodhgthmlb}
\Big(\Big\lceil\frac{p}{2}\Big\rceil+q-1\Big)\Big(1+\sum_{i=1}^p(|V(T_i)|-1)+\sum_{j=1}^q(|V(\mathcal H_j)|-1)\Big) <\prod_{i=1}^p |V(T_i)|\prod_{j=1}^q |V(\mathcal H_j)|.
\end{equation}
The\, left\, hand\, side\, of\, (\ref{prodhgthmlb})\, is\, an\, upper\, bound\, on\, the\, size\, of\, the\, union\, of\, the\, closed neighbourhoods of $\lceil\frac{p}{2}\rceil+q-1$ cops, when they are placed in $\mathcal H$. Therefore in the first round, after the $\lceil\frac{p}{2}\rceil+q-1$ cops have chosen their vertices in $\mathcal H$, the robber $R$ can choose a vertex to avoid capture in the second round.

Subsequently in each round, the robber $R$ avoids capture as follows. On his move, if no cop is at a vertex adjacent to him, then he passes his move. Otherwise, suppose there is a cop $C_1$ adjacent to him, say using the edge $e$. The robber $R$ will attempt to escape by moving along some other edge $f$ which is in a different direction to $e$. Let us call any vertex that $R$ may reach by using such an edge $f$ an \emph{escape vertex}. Now, consider the minimum number of cops that are necessary so that all escape vertices are \emph{covered}: they are either occupied by or are adjacent to the cops. For a direction corresponding to some $T_i$, there may be as few as only one escape vertex, namely, if $R$ is on a vertex which is a leaf of $T_i$. Thus for two directions corresponding to two of the $T_i$, one cop may be sufficient to cover all escape vertices in either direction. It can be the case that there are only two escape vertices $x$ and $y$, one in each direction, and the cop is at the vertex in the plane spanned by the two directions and adjacent to $x$ and $y$. For a direction corresponding to some $\mathcal H_j$, the condition $\delta([\mathcal H_j]_2)\ge 2$ implies that there are at least two escape vertices in the direction. This means that to cover these escape vertices, there should be a cop in the copy of $\mathcal H_j$ containing $R$. Therefore, if $e$ belongs to some $T_i$ (whence $p\ge 1$), then at least $\lceil\frac{p-1}{2}\rceil+q$ cops are necessary to cover all escape vertices. If $e$ belongs to some $\mathcal H_j$, then at least $\lceil\frac{p}{2}\rceil+q-1$ cops are necessary. Since we only have $\lceil\frac{p}{2}\rceil+q-2$ remaining cops (not including $C_1$) to attempt to cover all escape vertices, this number is insufficient in either case. Therefore, the robber $R$ can always escape to a vertex to avoid capture in the next round. The lower bound $c(\mathcal H)\ge\lceil\frac{p}{2}\rceil+q$ follows.

Now we prove the upper bound 
\begin{equation}\label{prodhgthmub}
c(\mathcal H)\le\Big\lceil\frac{p}{2}\Big\rceil+q.
\end{equation}
Note that part (a) and Theorem \ref{prodgraphthm}(c) imply that $c(\mathcal H)\le\lceil\frac{p+1}{2}\rceil+q$, so (\ref{prodhgthmub}) holds for $p$ odd. We prove (\ref{prodhgthmub}) for even $p$ by induction. Again by (a), we see that (\ref{prodhgthmub}) holds for $p=0$. Let $p\ge 2$ be even and suppose that (\ref{prodhgthmub}) holds for $p-2$. We show that $\frac{p}{2}+q$ cops have a winning strategy on $\mathcal H$. Write $\mathcal H=T_1\,\square\,T_2\,\square\,\mathcal G$, where $\mathcal G$ is the Cartesian product of the remaining trees and hypergraphs. We say that a cop or the robber \emph{moves in direction $1,2$ or $3$} if they move along an edge in a copy of $T_1,T_2$ or $\mathcal G$, respectively. By induction, we have $c(\mathcal G)\le \frac{p}{2}+q-1$. 
 
In the first round, the $\frac{p}{2}+q$ cops choose vertices in the same copy of $\mathcal G$ in $\mathcal H$, say $\tilde{\mathcal G}$, and then the robber $R$ chooses a vertex of $\mathcal H$. Let $\tilde{R}$ be the position in $\tilde{\mathcal G}$ which corresponds to the robber $R$. The $\frac{p}{2}+q$ cops play against $\tilde{R}$ within $\tilde{\mathcal G}$, and by induction, one cop, say $C_1$, is able to capture $\tilde{R}$. We are done if $\tilde{R}$ is the robber $R$ himself. Otherwise, proceed as follows.

From this point onwards, whenever $R$ moves in direction $3$, $C_1$ will always also move in direction $3$ so that she is in the same copy of $T_1\,\square\,T_2$ as $R$. Suppose that at some stage, $R$ moves only in direction $3$ or does not move for too many consecutive rounds. Then the remaining $\frac{p}{2}+q-1$ cops (other than $C_1$) can capture $R$, by firstly moving together in directions $1$ or $2$ to the same copy of $\mathcal G$ as $R$, then capture $R$ within the copy of $\mathcal G$ (by induction). Therefore, from time to time, $R$ must move in direction $1$ or $2$. But then, $C_1$ can capture $R$ according to the winning strategy in Lemma \ref{MMlem}, by moving in direction $1$ or $2$ whenever $R$ does so, or whenever $R$ does not move. Every pair of consecutive moves in direction $3$ made by $R$, then by $C_1$, have cancelled each other out, so the game played between $C_1$ and $R$ is equivalent to the one played on  $T_1\,\square\,T_2$ in Lemma \ref{MMlem}.
\end{proof}

We conclude this section by considering a Cartesian product-like hypergraph called the \emph{prism over a hypergraph}. This prism hypergraph was considered by Boonklurb, Singhun and Termtanasombat \cite{BST15}, when they considered the problem of finding a  decomposition of the hypergraph into Hamiltonian cycles. Here, we shall give a slightly more general definition of this prism hypergraph, and then consider the game of Cops and Robber on it.

Let $\mathcal H$ be a hypergraph, and $n,r\ge 2$. Let $\mathcal H_1,\dots, \mathcal H_n$ be disjoint copies of $\mathcal H$. For $i\neq j$, we say that vertices $v^i\in V(\mathcal H_i)$ and $v^j\in V(\mathcal H_j)$ (resp.~edges $e^i\in E(\mathcal H_i)$ and $e^j\in E(\mathcal H_j)$) are \emph{clones} if $v^i$ and $v^j$ (resp.~$e^i$ and $e^j$) represent the same vertex (resp.~edge) of $\mathcal H$. The \emph{prism hypergraph} $\mathcal P(\mathcal H,n,r)$ is the hypergraph constructed as follows. We take $\mathcal H_1,\dots, \mathcal H_n$. Then for every $1\le i<n$, we add edges of size $r$ of the form $v^iv^{i+1}u_1\cdots u_{r-2}$, where $v^i\in V(\mathcal H_i)$ and $v^{i+1}\in V(\mathcal H_{i+1})$ are clone vertices, and $u_1,\dots,u_{r-2}\in (e^i\cup e^{i+1})\setminus\{v^i,v^{i+1}\}$, where $e^i\in E(\mathcal H_i)$ and $e^{i+1}\in E(\mathcal H_{i+1})$ are clone edges such that $v^i\in e^i$, $v^{i+1}\in e^{i+1}$ and $|e^i\cup e^{i+1}|\ge r$. Such an edge $v^iv^{i+1}u_1\cdots u_{r-2}$ between $\mathcal H_i$ and $\mathcal H_{i+1}$ is an \emph{$i$-transitional edge (pivoted at $v^i$)}. In the case $r=2$, we see that $\mathcal P(\mathcal H,n,2)$ looks like a prism since it is the Cartesian product $\mathcal H\,\square\,P_n$, where $P_n$ is the path on $n$ vertices. 

If $\mathcal H$ is connected and has rank $r(\mathcal H)\ge \frac{r}{2}$, then clearly there is an $i$-transitional edge for every $1\le i<n$, so that $\mathcal P(\mathcal H,n,r)$ is also connected. We have the following result for the cop number of $\mathcal P(\mathcal H,n,r)$.

\begin{theorem}\label{prismthm}
Let $n\ge 2$ and $r\ge 3$. Let $\mathcal H$ be a connected hypergraph with anti-rank $s(\mathcal H)\ge\frac{r}{2}$. Then $c(\mathcal P(\mathcal H,n,r))=c(\mathcal H)$.
\end{theorem}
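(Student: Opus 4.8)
The plan is to prove the two inequalities $c(\mathcal P(\mathcal H,n,r))\ge c(\mathcal H)$ and $c(\mathcal P(\mathcal H,n,r))\le c(\mathcal H)$ separately, in both cases by exhibiting explicit strategies and exploiting the natural projection $\pi$ from $\mathcal P=\mathcal P(\mathcal H,n,r)$ onto $\mathcal H$ which sends each vertex $v^i\in V(\mathcal H_i)$ to the vertex of $\mathcal H$ it clones. The key structural fact I would record first is that $\pi$ is a kind of ``homomorphism with the cop-move-lifting property'': if $x,y$ are neighbours in $\mathcal P$, then $\pi(x)$ and $\pi(y)$ are equal or adjacent in $\mathcal H$ (this is immediate for edges inside a copy $\mathcal H_i$, and for a transitional edge $v^iv^{i+1}u_1\cdots u_{r-2}$ one checks that all the $\pi$-images lie in $e^i$ of $\mathcal H$, using the hypothesis $s(\mathcal H)\ge r/2$ to guarantee transitional edges exist so the hypergraph is genuinely connected), and conversely that a cop sitting at $v^i$ can, in one move, reach any clone of any $\mathcal H$-neighbour of $\pi(v^i)$ while staying in level $i$ or moving one level up or down as desired.

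For the lower bound $c(\mathcal P)\ge c(\mathcal H)$: suppose $k<c(\mathcal H)$ cops play on $\mathcal P$. The robber will play in $\mathcal P$ by ``shadowing'' an optimal evading strategy $\sigma$ in $\mathcal H$ against the projected cop positions $\pi(C_1),\dots,\pi(C_k)$, which by Observation \ref{csameobs} applied to $\mathcal H$ (equivalently by $k<c(\mathcal H)=c([\mathcal H]_2)$) never results in capture in $[\mathcal H]_2$. Concretely, the robber stays in a fixed copy, say $\mathcal H_1$, for as long as no cop is in $\mathcal H_1$; whenever $\sigma$ tells him to move to an $\mathcal H$-neighbour he uses the corresponding within-copy edge, and whenever a cop enters his copy he can also slide to an adjacent copy if needed — but in fact staying in $\mathcal H_1$ suffices, because any cop move that threatens him within $\mathcal H_1$ projects to a move threatening $\pi(R)$ in $\mathcal H$, which $\sigma$ evades. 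Since $\sigma$ guarantees $\pi(R)\ne\pi(C_j)$ for all $j$ at the end of every round, in particular $R\ne C_j$. Hence $k$ cops cannot win, giving $c(\mathcal P)\ge c(\mathcal H)$.

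For the upper bound $c(\mathcal P)\le c(\mathcal H)$: we run $c(\mathcal H)$ cops so that their projections execute a winning strategy in $\mathcal H$ against $\pi(R)$, while additionally all cops keep themselves in the same level $\mathcal H_i$ as the robber. This is possible: on the cop side, after the robber's move to some $w\in V(\mathcal H_j)$, each cop first matches the level (moving up or down one level per round as needed — note the robber changes level by at most one per round since transitional edges join consecutive levels only, so the cops can track the level exactly), and simultaneously advances its $\mathcal H$-projection one step of the $\mathcal H$-strategy; the move-lifting fact above shows a single cop move accomplishes both, possibly after an initial catch-up phase of at most $n$ rounds to synchronise levels (during which the robber's projection can only wander inside $\mathcal H$, not escape, since the $\mathcal H$-cop-strategy still makes progress on projections). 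Once some cop has both the same level and the same $\mathcal H$-projection as the robber, it occupies the robber's vertex, so $c(\mathcal H)$ cops win.

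The main obstacle I expect is the level-synchronisation bookkeeping in the upper bound: one must argue carefully that the cops can simultaneously ``win in $\mathcal H$ via projections'' and ``pin down the level'' without the two objectives interfering, and in particular that the robber cannot exploit the at-most-one-level-per-round mismatch during the catch-up phase to reset the $\mathcal H$-game. The clean way to handle this is to observe that the $\mathcal H$-strategy is robust: it captures $\pi(R)$ regardless of how $\pi(R)$ moves, and level-changes of $R$ either leave $\pi(R)$ fixed (if $R$ moves via a transitional edge and lands on its own clone — though generically a transitional move does change $\pi$) or move $\pi(R)$ to an $\mathcal H$-neighbour, which is a legal robber move in the $\mathcal H$-game; in every case the cops' projected strategy keeps working, and level-matching is a lower-order task that finishes in finitely many rounds and then is maintained for free. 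A secondary point to verify is that $r\ge 3$ is genuinely used (for $r=2$ the statement would be about $\mathcal H\,\square\,P_n$ and is covered by Theorem \ref{prodhgthm}(a)-type phenomena, where the answer can be $2$ even when $c(\mathcal H)=1$), so one should check that the transitional edges of size $r\ge3$ are ``wide'' enough that a cop using one can always choose its landing clone to match whatever the robber did — this is exactly where $s(\mathcal H)\ge r/2$ enters.
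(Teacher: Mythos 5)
Your proposal is correct and follows essentially the same route as the paper's proof: the lower bound by confining the robber to $\mathcal H_1$ and evading the cops' projections (the paper's ``clones''), and the upper bound by first winning the projected game in $\mathcal H$ and then having a cop close the level gap one copy at a time, using the fact that a transitional edge of size $r\ge 3$ pivoted at $v^i$ contains both $v^{i+1}$ and any desired $w^{i+1}$ with $w\in e$ (guaranteed by $|e^i\cup e^{i+1}|\ge 2s(\mathcal H)\ge r$). The only cosmetic difference is that the paper runs the projection-capture entirely inside $\mathcal H_1$ before advancing levels, rather than interleaving level-matching with the $\mathcal H$-strategy, which sidesteps the synchronisation bookkeeping you flag.
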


\begin{proof}
We first show that $c(\mathcal H)$ cops are sufficient to capture the robber $R$ in $\mathcal P(\mathcal H,n,r)$. The cops have a winning strategy as follows. First, the $c(\mathcal H)$ cops choose vertices of $\mathcal H_1$, then the robber chooses a vertex of $\mathcal P(\mathcal H,n,r)$. The cops then play within $\mathcal H_1$ and pursue the clone of the robber's position. At some stage, one of the cops, say $C_1$, successfully captures the robber's clone in $\mathcal H_1$. We are done if the robber $R$ himself is already in $\mathcal H_1$. Otherwise from here on, the strategy is to advance $C_1$ from $\mathcal H_1$ to $\mathcal H_2$, then to $\mathcal H_3$ and so on, while tracking the clone of the robber's position. Suppose that at some stage, $C_1$ is at $v^i\in V(\mathcal H_i)$ and $R$ is at $v^j\in V(\mathcal H_j)$ which is a clone of $v^i$, for some $1\le i<j\le n$. It is the robber's turn to move, and he may move to $\mathcal H_{j-1},\mathcal H_j$ or $\mathcal H_{j+1}$. Since $r\ge 3$, the cop $C_1$ may respond to the robber's move as follows.
\begin{itemize}
\item Suppose that $R$ moves to $w^{j-1}\in V(\mathcal H_{j-1})$ along a $(j-1)$-transitional edge, pivoted at some $x^{j-1}\in V(\mathcal H^{j-1})$. Any two of $v^{j-1}, w^{j-1},x^{j-1}$ may be the same. Then there is an edge $e^{j-1}\in E(\mathcal H_{j-1})$ containing $v^{j-1},w^{j-1},x^{j-1}$. If $j=i+1$, then $C_1$ captures $R$ by moving along $e^i$. Otherwise, $j>i+1$, and $C_1$ moves to $w^{i+1}$ along an $i$-transitional edge pivoted at $v_i$ and containing $v^i,v^{i+1},w^{i+1}$, which exists since $v^i,w^i\in e^i$ and $|e^i\cup e^{i+1}|\ge r$. If $j=i+2$, then $C_1$ again captures $R$. Otherwise, $j>i+2$.
\item Suppose that $R$ moves to $w^j\in V(\mathcal H_j)$ along an edge $e^j\in E(\mathcal H_j)$, where $w^j=v^j$ is possible. Then $C_1$ moves to $w^{i+1}\in V(\mathcal H_{i+1})$ along an $i$-transitional edge pivoted at $v^i$ and containing $v^i, v^{i+1},w^{i+1}$, which exists since since $v^i,w^i\in e^i$ and $|e^i\cup e^{i+1}|\ge r$. If $j=i+1$, then $C_1$ captures $R$. Otherwise, $j>i+1$.
\item Suppose that $j<n$, and $R$ moves to $w^{j+1}\in V(\mathcal H_{j+1})$ along a $j$-transitional edge, pivoted at some $x^j\in V(\mathcal H^j)$. Any two of $v^{j+1}, w^{j+1},x^{j+1}$ may be the same. Then there is an edge $e^{j+1}\in E(\mathcal H_{j+1})$ containing $v^{j+1},w^{j+1},x^{j+1}$. $C_1$ moves to $w^{i+1}$ along an $i$-transitional edge pivoted at $v_i$ and containing $v^i,v^{i+1},w^{i+1}$, which exists since $v^i,w^i\in e^i$ and $|e^i\cup e^{i+1}|\ge r$.
\end{itemize}
If the cop $C_1$ has not captured the robber $R$ after a pair of moves as described above, then these moves are iterated. On each iteration where the robber has not been captured, $C_1$ has advanced from some $\mathcal H_i$ to $\mathcal H_{i+1}$ and tracked the clone of $R$, while $R$ is confined to some $\mathcal H_j$ where $i+1<j\le n$. This procedure must terminate as $i$ increases towards $n$, with the cop $C_1$ capturing the robber $R$.

We have now proved Theorem \ref{prismthm} for $c(\mathcal H)=1$. It remains to prove that for $c(\mathcal H)\ge 2$,  $c(\mathcal H)-1$ cops cannot capture the robber in $\mathcal P(\mathcal H,n,r)$. The robber has a winning strategy as follows. First, the $c(\mathcal H)-1$ cops choose vertices of $\mathcal P(\mathcal H,n,r)$. The robber then considers the clones in $\mathcal H_1$ of the cops' positions, and chooses a vertex of $\mathcal H_1$ for which he has a winning strategy if he was playing the game within $\mathcal H_1$ against the cops' clones. From here on, the robber simply plays within $\mathcal H_1$ by evading the cops' clones in $\mathcal H_1$. Indeed, suppose that at some stage, the robber makes a move to $v^1\in V(\mathcal H_1)$. Then certainly, $v^1$ is not adjacent to any vertex of $\mathcal H_1$ occupied by a cop. Suppose that $v^1$ is adjacent to some vertex $w^2\in V(\mathcal H_2)$ which is occupied by a cop. Then there exists a $1$-transitional edge pivoted at some $x^1\in V(\mathcal H_1)$, and containing $v^1,w^2,x^1,x^2$. Any two of $v^1,w^1,x^1$ may be the same, and similarly for $v^2,w^2,x^2$. But this implies that $v^1,w^1,x^1$ belong to some edge of $\mathcal H_1$, which is a contradiction since the robber made the move to $v^1$ to avoid capture by the clone of the cop at $w^1$. Therefore, each time the robber makes a move, he is able to avoid capture by a cop, and this may be repeated indefinitely.

This completes the proof of Theorem \ref{prismthm}.
\end{proof}

For\, the\, case\, $r=2$,\, recall\, that\, $\mathcal P(\mathcal H,n,2)=\mathcal H\,\square\,P_n$.\, Since\, $c(P_n)=1$,\, if\, $\mathcal H$\, is connected, then Theorem \ref{prodhgthm}(a) implies that $c(\mathcal P(\mathcal H,n,2))\in \{c(\mathcal H),c(\mathcal H)+1\}$. Both values $c(\mathcal H)$ and $c(\mathcal H)+1$ can be attained. If $\mathcal H$ is any hypergraph with $c(\mathcal H)=1$ (for example, $\mathcal H$ may be a uniform complete hypergraph, or a hypertree), then Theorem \ref{prodhgthm}(a) implies that $c(\mathcal P(\mathcal H,n,2))=c(\mathcal H\,\square\,P_n)=2=c(\mathcal H)+1$. On the other hand, if $\mathcal H=C_\ell$ is a graph cycle with length $\ell\ge 4$, then a result of Neufeld and Nowakowski about the cop number of the Cartesian product of cycles and trees (see \cite{NN98}, Theorem 2.8) implies that $c(\mathcal P(\mathcal H,n,2))=c(C_\ell\,\square\,P_n)=2=c(\mathcal H)$.

\section{Conclusion and future research}
In this paper, we considered the game of Cops and Robber played on hypergraphs and proved some results. In Theorem \ref{charthm}, we have proved a characterisation result of cop-win hypergraphs and described a structural property called \emph{dismantlable} for these hypergraphs. In Theorem \ref{hypertreethm}, we proved that all hypertrees are cop-win hypergraphs. In Theorems \ref{prodhgthm} and \ref{prismthm}, we determined the exact cop number of the Cartesian product of trees with cop-win hypergraphs, and of prism hypergraphs which have a Cartesian product-like structure. 

We have seen that the analysis of the game played on hypergraphs, compared to graphs, is significantly more challenging, and considering the $2$-section of hypergraphs is a useful tactic in such analysis. Some immediate open questions that we may ask, in view of the results of this paper, are the following. Firstly, are there other interesting classes of hypergraphs for which we can determine their cop-numbers? Secondly, can we determine the cop-numbers of some other Cartesian products of hypergraphs? In particular, note that in Corollary \ref{prodhgcor}, we have determined the cop number for the Cartesian product of hypertrees with anti-rank at least $3$. But what happens if we drop the anti-rank condition? Finally, it will be interesting to investigate the cop-numbers of other types of product hypergraphs.

We hope that the topic of the game of Cops and Robber played on hypergraphs will gain more significant attention in the near future.

\section*{Acknowledgements}
Pinkaew Siriwong is supported by Graduate School and Faculty of Science, Chulalongkorn University,\, and\, Science\, Achievement\, Scholarship\, of\, Thailand.\, Henry\, Liu\, is\, partially supported by the Startup Fund of One Hundred Talent Program of SYSU, and National Natural Science Foundation of China (No. 11931002). 

Pinkaew\, Siriwong\, acknowledges\, the\, generous\, hospitality\, of\, Sun\, Yat-sen\, University, Guangzhou, China. She was able to carry out part of this research with Henry Liu during her visit there from November 2019 to January 2020.


\begin{thebibliography}{99}

\bibitem{Adl04} I.~Adler, Marshals, monotone marshals, and hypertree-width, J.~Graph Theory 47 (2004) 275--296.

\bibitem{AF84} M.~Aigner, M.~Fromme, A Game of cops and robbers, Discrete Appl.~Math.~8 (1984) 1--12.

\bibitem{AM11} N.~Alon, A.~Mehrabian, On a generalization of Meyniel's conjecture on the Cops and Robbers game, Electron.~J.~Combin.~18 (2011), P19.

\bibitem{Bai11} W.D.~Baird, Cops and robbers on graphs and hypergraphs, Master thesis, Ryerson University, 2011.

\bibitem{BB12} W.~Baird, A.~Bonato, Meyniel's conjecture on the cop number: a survey, J. Comb. 3 (2012) 225--238.

\bibitem{BBNS17} P.~Balister, B.~Bollob\'as, B.~Narayanan, A.~Shaw, Catching a fast robber on the grid, J.~Combin.~Theory Ser.~A 152 (2017) 341--352.

\bibitem{Bol98} B. Bollob\'as, Modern Graph Theory, Springer-Verlag, New York, 1998.


\bibitem{BN11} A.~Bonato, R.J.~Nowakowski, The Game of Cops and Robbers on Graphs, Student Mathematical Library Vol.~61, American Mathematical Society, Providence, 2011.

\bibitem{BST15} R.~Boonklurb, S.~Singhun, S.~Termtanasombat, Hamiltonian decompositions of prisms over complete 3-uniform hypergraphs, Discrete Math.~Algorithms Appl.~7 (2015), 1550031, 11pp.

\bibitem{FGKNS10} F.V.~Fomin, P.A.~Golovach, J.~Kratochv\'il, N.~Nisse, K.~Suchan, Pursuing a fast robber on a graph, Theoret.~Comput.~Sci.~411 (2010) 1167--1181.

\bibitem{Fra87} P.~Frankl, Cops and robbers in graphs with large girth and Cayley graphs, Discrete Appl.~Math.~17 (1987) 301--305.

\bibitem{FKL12} A.~Frieze, M.~Krivelevich, P.~Loh, Variations on cops and robbers, J.~Graph Theory 69 (2012) 383--402.

\bibitem{GLS03} G.~Gottlob, N.~Leone, F.~Scarcelloc, Robbers, marshals, and guards: game theoretic and logical characterizations of hypertree width, J.~Comput.~System Sci.~66 (2003) 775--808.




\bibitem{LP12} L.~Lu, X.~Peng, On Meyniel's conjecture of the cop number, J.~Graph Theory 71 (2012) 192--205.

\bibitem{MM87} M.~Maamoun, H.~Meyniel, On a game of policemen and robber, Discrete Appl.~Math.~17 (1987) 307--309.

\bibitem{Meh11} A.~Mehrabian, Lower bounds for the cop number when the robber is fast, Combin. Probab.~Comput.~20 (2011) 617--621.

\bibitem{NN98} S.~Neufeld, R.~Nowakowski, A game of cops and robbers played on products of graphs, Discrete Math.~186 (1998) 253--268.

\bibitem{NW83} R.~Nowakowski, P.~Winkler, Vertex-to-vertex pursuit in a graph, Discrete Math.~43 (1983) 235--239.

\bibitem{Qui78} A.~Quilliot, Jeux et pointes fixes sur les graphes, Ph.D thesis, Universit\'{e} de Paris VI, 1978.


\bibitem{SS11} A.~Scott, B.~Sudakov, A bound for the cops and robbers problem, SIAM J.~Discrete Math.~25 (2011) 1438--1442.

\bibitem{SBS20} P.~Siriwong, R.~Boonklurb, S.~Singhun, Cops and robbers game on the Cartesian, direct and strong products of hypergraphs, Thai J.~Math., Special issue (2020): IMT-GT International Conference on Mathematics, Statistics and Their Applications 2018, 27--33.

\bibitem{Tos88} R.~To\v{s}i\'{c}, On cops and robber game, Studia Sci.~Math.~Hungar.~23 (1988) 225--229.

\bibitem{Vol09} V.I.~Voloshin, Introduction to Graph and Hypergraph Theory, Nova Science Publishers, Inc., New York, 2009.


\end{thebibliography}
\end{document}